\documentclass[11pt,reqno]{article}
\usepackage{amsthm,amsmath, amssymb, amsopn, amsfonts,enumitem}
\usepackage{cases}
\usepackage[margin=1in]{geometry}
\usepackage{subcaption}
\usepackage[colorlinks,citecolor=blue,urlcolor=blue]{hyperref}
\usepackage{graphicx}
\usepackage{tikz}
\usetikzlibrary{decorations.fractals}
\usetikzlibrary{patterns,arrows,shapes,decorations.pathreplacing}
\usepackage{pgfplots}
\usepackage{stmaryrd} 
\usepackage{soul,xcolor}
\setstcolor{red}
\usepackage{xcolor}

\numberwithin{equation}{section}
\theoremstyle{plain}
\newtheorem{Definition}{Definition}[section]
\newtheorem{Remark}{Remark}[section]
\newtheorem{Theorem}{Theorem}[section]
\newtheorem{Lemma}{Lemma}[section]
\newtheorem{Proposition}{Proposition}[section]

\newtheorem{Assumption}{Assumption}[section]



\newcommand{\be}{\begin{equation}}
\newcommand{\ee}{\end{equation}}
\newcommand{\bee}{\begin{equation*}}
\newcommand{\eee}{\end{equation*}}
\newcommand{\bi}{\begin{itemize}}
\newcommand{\ei}{\end{itemize}}
\def \E{\mathbb{E}}
\def \F{\mathbb{F}}
\def \N{\mathbb{N}}
\def \P{\mathbb{P}}

\def \R{\mathbb{R}}
\def \X{\mathbb{X}}

\def \Bc{{\mathcal B}}
\def \Ec{{\mathcal E}}
\def \Fc{{\mathcal F}}

\def \eps{\varepsilon}

\makeatletter
\newcommand{\setword}[2]{%
	\phantomsection
	#1\def\@currentlabel{\unexpanded{#1}}\label{#2}%
}
\makeatother

\title{Stability of Time-inconsistent Stopping for One-dimensional Diffusion - A Longer Version}

\author{Erhan Bayraktar\thanks{
		Department of Mathematics, University of Michigan, Ann Arbor, email: \texttt{erhan@umich.edu}. E. Bayraktar is partially supported by the National Science Foundation under grant DMS2106556 and by the Susan M. Smith chair.}	
	\and Zhenhua Wang\thanks{
		Department of Mathematics, University of Michigan, Ann Arbor, email: \texttt{zhenhuaw@umich.edu}. }
	\and Zhou Zhou\thanks{School of Mathematics and Statistics, University of Sydney, Australia, email:
		\texttt{zhou.zhou@sydney.edu.au}.}
}

\begin{document}

\maketitle

\date{}

\begin{abstract}
We investigate the stability of the equilibrium-induced optimal value in a one-dimensional diffusion setting for a time-inconsistent stopping problem under non-exponential discounting. We show that the optimal value is semi-continuous with respect to the drift, volatility, and reward function. An example is provided showing that the exact continuity may fail. With equilibria extended to $\eps$-equilibria, we establish the relaxed continuity of the optimal value.
\end{abstract}	
{\bf Keywords:} Time-inconsistency, Optimal equilibrium, $\eps$-equilibria, Stability\\
{\bf MSC(2020):}
49K40, 
60G40, 
91A11, 
91A15. 

\section{Introduction}
The study of time-inconsistent stopping has attracted considerable attention recently. See \cite{MR4250561,huang2018time,MR4273542,MR4116459,MR4080735,MR3880244,MR4205889,MR4332966,MR3980261,bayraktar2022equilibria,MR3911711} and the references therein. Among them, \cite{huang2018time} provides a general framework for time-inconsistent stopping in continuous time. The notion of equilibria in \cite{huang2018time} (called mild equilibria since \cite{MR4205889}) is further investigated in e.g., \cite{MR4250561,MR4273542,MR4116459}. In particular, it is shown in \cite{MR4250561,MR4116459} that there exists an optimal mild equilibrium which pointwisely dominates any other mild equilibrium. Another concept of equilibria (called weak equilibria in \cite{MR4205889}) is proposed using a first order condition in \cite{MR3880244}. Such kind of equilibria are typically characterized by some extended HJB equation system. See. e.g., \cite{MR4080735,MR3880244,MR4332966}. In \cite{MR4205889}, a third notion of equilibria, called strong equilibria, is proposed, which better captures the economic meaning of being ``equilibria". {\color{black} A further description of mild, weak and strong equilibria is relegated to Appendix \ref{appendix}.} In \cite{MR4205889} it is shown that an optimal mild equilibrium is also weak and strong in a continuous Markov chain setting under non-exponential discounting. Recently, \cite{bayraktar2022equilibria} extends such result to the one-dimensional diffusion case. {\color{black} Let us also mention that pure strategies are studied in \cite{huang2018time, MR4332966,bayraktar2022equilibria, MR4205889, MR4250561,MR4273542,MR4116459}, while mixed-type equilibria are investigated in \cite{MR4080735, MR4276004, bodnariu2022local}.}

In this paper, in the one-dimensional diffusion infinite-horizon setting under weighted (and thus non-exponential) discounting, we consider the stability of the optimal value induced by all pure mild equilibria (denote as $V^{\mu,\sigma}(\cdot, f)$) with respect to  (w.r.t.) the drift $\mu$, volatility $\sigma$ and reward function $f$. We show that the optimal value w.r.t. $(\mu,\sigma,f)$, i.e., $(\mu,\sigma,f)\mapsto V^{\mu,\sigma}(\cdot,f)$, is upper semi-continuous. We provide an example showing that the exact continuity may fail. In order to recover the continuity, we relax the equilibrium set and consider $\eps$-mild equilibria. Thanks to this relaxation, we establish the continuity in the sense that $\lim_{\eps\searrow 0}\lim_{n\to\infty}V_\eps^{\mu^n,\sigma^n}(\cdot,f^n)=V^{\mu,\sigma}(\cdot,f)$ when $(\mu^n,\sigma^n,f^n)\to (\mu,\sigma,f)$ in certain sense, where $V_\eps^{\mu^n,\sigma^n}(\cdot,f^n)$ is the optimal value generated by all $\eps$-mild equilibria w.r.t. $(\mu^n,\sigma^n,f^n)$.

Our paper extends the results in \cite{bayraktar2022stability} to the one-dimensional diffusion case. Compared to \cite{bayraktar2022stability}, a major difference is the mathematical approach: in this paper we need to apply different methods to establish intermediate results, including a PDE approach for the uniform convergence of some stopping value functions. Another difference is related to the semi-continuity for the smallest mild equilibrium (which is an optimal one). In \cite{bayraktar2022stability} it is shown that the smallest mild equilibrium is lower semi-continuous w.r.t. the law of the underlying process and the reward function in discrete time, while in this paper we provide an example showing that such semi-continuity may fail in the {\color{black} diffusion} framework.

The literature on stability analysis for Nash games is very sparse. Let us mention the very recent works \cite{feinstein2020continuity} and \cite{feinstein2022dynamic} on this topic. In the research of time-inconsistent stopping, to the best of our knowledge, only \cite{bayraktar2022stability,MR4276004} have studied the stability before, yet the notion of stability in \cite{MR4276004} differs from that in our paper. Given the difference between this paper and \cite{bayraktar2022stability}, and limited literature in this topic, we believe our results are novel and significant.

The rest of the paper is organized as follows. Section \ref{sec:preliminaries} provides the setup. The main results are introduced in Section \ref{sec:stability}, including the semi-continuity of the optimal value function w.r.t $(\mu,\sigma,f)$, and {\color{black} the stability of the value function when relaxing the equilibrium set.} In Section \ref{sec:example}, we provide two examples, one for the strict semi-continuity of $(\mu,\sigma,f)\mapsto V^{\mu,\sigma}(\cdot,f)$, the other for the failure of the semi-continuity for the smallest mild equilibrium w.r.t $(\mu,\sigma,f)$. {\color{black}Appendix \ref{appendix} provides a brief introduction of mild, weak and strong equilibria.}

\section{Setup and Preliminaries}\label{sec:preliminaries}

Let $(\Omega, \P, (\Fc_t)_{t}, \F)$ be a filtered probability space supporting a 1-dimensional Brownian motion $W$. Let $\X\subset \R$ be an open interval and $\Bc$ be the class of Borel measurable subsets of $\X$. For $A\in \Bc$, denote by $\overline{A}$ the closure of $A$ (w.r.t. the Euclidean topology induced by $\X$). $\R_+$ (resp. $\N$) denotes the set of non-negative real numbers (resp. all positive integers), and set $\overline\N:=\N\cup\{\infty\}$. By convention $\frac{1}{\infty}=0$. For a function $g: \X\to \R$, set $\|g\|_{\infty}:=\sup_{x\in \X} |g(x)|$. We further set $Q:= (\mu,\sigma)$ for two functions $\mu,\sigma: \X\mapsto \R$ such that a 1-dimensional diffusion $X$ given by
\begin{equation}\label{e1}
	dX_t=\mu(X_t)dt+\sigma(X_t)dW_t,
\end{equation}
is supported on $\X$ for any $X_0=x\in\X$.
\begin{Definition}
	$Q=(\mu,\sigma)$ is said to be regular, if $\mu, \sigma$ are Lipschitz continuous and $|\sigma(\cdot)|>0$.
\end{Definition}

{\color{black}Throughout this paper, we always assume $Q$ is regular and such that $X$ given by \eqref{e1} is supported on $\X$.} Denote by $\E^Q_x[\cdot ]$ (resp. $\P^Q_x(\cdot)$) the expectation (resp. probability) associated with $Q$ and $X_0=x$.

Let $\delta:\R_+\mapsto[0,1]$ be a discount function that is strictly decreasing and $\lim_{t\to\infty}\delta(t)=0$. We make the following assumption on $\delta$.
\begin{Assumption}\label{assume.delta.weight}
	$\delta(\cdot)$ is a weighted discount function of the form:
	$
	\delta(t)= \int_{0}^\infty e^{-r t}F(dr),
	$
	where $F(r): [0,\infty)\to [0,1]$ is a cumulative distribution function.
\end{Assumption}

\begin{Remark}
	Most commonly used discount functions obey the weighted discounting form. See e.g., \cite{MR4124420} for a detailed discussion. Moreover, \cite[Proposition 1]{MR4124420} indicates that all weighted discount functions satisfy the following decreasing impatience property:
\be\label{eq.delta.logsub} 
\delta(t+s)\geq \delta(t)\delta(s)\quad \forall\, t,s\geq 0,
\ee
{\color{black}In addition, pure and mixed weak equilibria and the corresponding smooth fit property under weighted discounting have been investigated in \cite{MR4332966} and \cite{bodnariu2022local} respectively.}
\end{Remark}

For $A\in \Bc$, let
$
\rho_{A}:=\inf\{t>0: X_t\in A\}.
$
Given $Q=(\mu,\sigma)$, a reward function $f:\X\mapsto\R_+$, and $A\in \Bc$, define 
$$ 
J^Q(x, A, f):= \E^Q_x[\delta(\rho_A) f(X_{\rho_A})]\quad \forall x\in \X.
$$
Recall the notion of mild equilibria and optimal mild equilibria defined in \cite{bayraktar2022equilibria} as follows.
\begin{Definition}[Mild equilibria and optimal mild equilibria]\label{def.mild.optimal}
	A closed set $S\subset\X$ is said to be  a mild equilibrium (w.r.t. $f$ and $Q$), if
\be \label{e3} f(x)\leq J^Q(x,S,f)\quad \forall x\notin S.\ee
Denote by $\Ec^Q(f)$ the set of mild equilibria w.r.t. $(f,Q)$.
A mild equilibrium $S$ is  said to be optimal, if for any other mild equilibrium $R\in\Ec^Q(f)$, 
$$J^Q(x,S,f)\geq J^Q(x,R,f)\quad \forall x\in\X.$$
\end{Definition}
\begin{Remark}
	{\color{black}$|\sigma|>0$ implies that $\rho_{\{ x \}}=0, \mathbb{P}^x$-a.s. for any $x\in \mathbb{X}$. Thus, $\rho_A= \rho_{\overline{A}}$ for any $ A\in \Bc$.
	This is why we restrict equilibria to be closed. Moreover, this also indicates that $f(x)=J^Q(x,A,f)$ for any $x\in \overline{A}$. Consequently, there is no need to consider the condition
	$$f(x)\geq J^Q(x,S,f),\quad \forall x\in S$$
	as being part of the requirement for mild equilibria. We refer to Appendix \ref{appendix} for a detailed discussion.}
\end{Remark}

\begin{Remark}
	It is shown in \cite[Theorem 4.1]{MR4116459} that the smallest mild equilibrium is optimal. We rewrite this result as a lemma in the following, since it will be used later in the paper. It is proved in \cite{bayraktar2022equilibria,MR4205889} that under mild assumptions an optimal mild equilibrium is also weak and strong.
\end{Remark}

\begin{Lemma}\label{lm.optimalmild}
	Let Assumption \ref{assume.delta.weight} (or \eqref{eq.delta.logsub}) hold. Suppose $Q=(\mu,\sigma)$ is regular and $f$ is non-negative, continuous and $\|f\|_\infty<\infty$. Then 
	$$S^*(f,Q):= \cap_{S\in \Ec^Q(f)} S$$
	is an optimal mild equilibrium.
\end{Lemma}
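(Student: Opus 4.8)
The plan is to split the assertion into two parts: (i) $S^*(f,Q)$ is itself a mild equilibrium, and (ii) it dominates every other mild equilibrium in value. First I would record a convenient reformulation of Definition \ref{def.mild.optimal}. Writing $v^Q_C(x):=J^Q(x,C,f)$, and using $\rho_C=0$ on $\overline C=C$ (so that $v^Q_C=f$ on $C$), a closed set $C$ is a mild equilibrium \emph{if and only if} $v^Q_C\ge f$ on all of $\X$. In particular $\X\in\Ec^Q(f)$, so the family is nonempty; and $S^*$, being an intersection of closed sets, is closed. Thus the whole task reduces to verifying $v^Q_{S^*}\ge f$ and $v^Q_{S^*}\ge v^Q_R$ for every $R\in\Ec^Q(f)$.

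\textbf{The domination engine.} The key monotonicity lemma is: if $A\subseteq B$ are two mild equilibria, then $v^Q_A\ge v^Q_B$ pointwise. To prove it, note $A\subseteq B$ forces $\rho_A\ge\rho_B$ and, by the strong Markov property, $\rho_A=\rho_B+\rho_A\circ\theta_{\rho_B}$. Combining this with the decreasing-impatience inequality \eqref{eq.delta.logsub}, $\delta(\rho_B+\cdot)\ge\delta(\rho_B)\delta(\cdot)$, and with $v^Q_A\ge f$ (valid because $A$ is an equilibrium) gives, in one chain,
\[
v^Q_A(x)=\E^Q_x\!\left[\delta(\rho_A)f(X_{\rho_A})\right]\ge\E^Q_x\!\left[\delta(\rho_B)\,v^Q_A(X_{\rho_B})\right]\ge\E^Q_x\!\left[\delta(\rho_B)f(X_{\rho_B})\right]=v^Q_B(x).
\]
Granting part (i), part (ii) is then immediate: for any $R\in\Ec^Q(f)$ one has $S^*\subseteq R$, whence $v^Q_{S^*}\ge v^Q_R$, i.e.\ $S^*$ is optimal.

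\textbf{The crux: $S^*$ is an equilibrium.} The real content is part (i). I would first show that $\Ec^Q(f)$ is stable under intersection. For two equilibria $R,S$ the natural route is to track the alternating hitting times of $R$ and $S$ and iterate the one-step inequality above, using that a continuous path which accumulates alternately in two closed sets must land in their (closed) intersection $R\cap S$, so that $\rho_{R\cap S}$ is the limit of these times; chaining the discount factors via \eqref{eq.delta.logsub} should yield $v^Q_{R\cap S}\ge f$. This makes $\Ec^Q(f)$ directed downward. Passing to the full family, I would extract a decreasing sequence $S_n$ of equilibria with $\bigcap_n S_n=S^*$, apply the monotonicity lemma to get that $v^Q_{S_n}$ is nondecreasing and bounded by $\|f\|_\infty$, and then obtain $v^Q_{S^*}\ge f$ by letting $n\to\infty$ in $v^Q_{S_n}\ge f$, provided $v^Q_{S_n}\to v^Q_{S^*}$.

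\textbf{Main obstacle.} The delicate step is precisely this limit passage. For a decreasing sequence of closed sets one must control the convergence $\rho_{S_n}\to\rho_{S^*}$ of hitting times, the convergence of the associated discount factors, and then justify interchanging limit and expectation. This is where the regularity of $Q$ (Lipschitz and nondegenerate coefficients, so that $X$ is a genuine diffusion with no sticky behavior and hitting times of shrinking closed sets are well-behaved) together with the continuity and boundedness of $f$ are essential; it is also the point at which the potential-theoretic arguments of \cite{MR4116459} enter. A secondary subtlety is ensuring that the minimum of the family is reached along a \emph{countable} decreasing approximation, which I would settle using the finite-intersection stability established above together with separability of the one-dimensional state space. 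Once this convergence is secured, parts (i) and (ii) combine to show that $S^*(f,Q)$ is an optimal mild equilibrium.
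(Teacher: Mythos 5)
The paper does not actually prove this lemma: it is imported verbatim from \cite[Theorem 4.1]{MR4116459}, so there is no in-paper proof to compare against. Your outline is, in substance, a correct reconstruction of the argument in that reference: the reformulation of \eqref{e3} as $J^Q(\cdot,C,f)\geq f$ on all of $\X$, the domination step via $\rho_A=\rho_B+\rho_A\circ\theta_{\rho_B}$ combined with \eqref{eq.delta.logsub} and $f\geq 0$, closure of $\Ec^Q(f)$ under pairwise intersection via alternating hitting times, the Lindel\"of reduction to a countable decreasing subfamily, and the final limit passage are exactly the right skeleton. Two places where you leave work undone are genuinely the places where work is needed, but both close in one dimension: (a) in the pairwise-intersection step you must check both inequalities $\tau_\infty\leq\rho_{R\cap S}$ (each alternating hitting time is dominated by the first hitting time of $R\cap S$, since $R\cap S$ is contained in both sets) and $\tau_\infty\geq\rho_{R\cap S}$ (path continuity plus closedness forces $X_{\tau_\infty}\in R\cap S$ on $\{\tau_\infty<\infty\}$, while on $\{\tau_\infty=\infty\}$ the term vanishes because $\delta(\infty)=0$ and $f\geq0$); (b) for the decreasing sequence $S_n\downarrow S^*$, the limit $\rho_{S_n}\uparrow\rho_{S^*}$ follows by the same path-continuity/closedness argument, after which bounded convergence with $\|f\|_\infty<\infty$ and continuity of $\delta$ and $f$ gives $J^Q(x,S_n,f)\to J^Q(x,S^*,f)$; no deeper potential theory or nondegeneracy of $\sigma$ is needed for this step. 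One minor economy you missed: in the monotonicity lemma only the smaller set $A$ needs to be an equilibrium (to guarantee $J^Q(\cdot,A,f)\geq f$); $B$ can be any closed superset, which is the form in which the lemma is actually used to prove optimality.
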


Let $V^{Q}(x,f)$ be the optimal value generated over all mild equilibria, i.e.,
\be\label{eq.def.V}  
V^{Q}(x,f):=\sup_{S\in \Ec^Q(f)} J^Q(x,S,f)\quad \forall x\in \X.                 
\ee
Under the assumption in Lemma \ref{lm.optimalmild}, we have $V^{Q}(x,f)=J^Q(x, S^*(f,Q), f)$.

To elicit our stability results, we need the following definition of $\eps$-mild equilibria.
\begin{Definition}[$\eps$-mild equilibrium]\label{def.equi.eps}
	Let $\varepsilon\geq 0$. A closed set $S\subset \X$ is called an $\varepsilon$-mild equilibrium (w.r.t. $f$ and $Q$), if 
\be	\label{eq.def.epsout}f(x)\leq J^Q(x,S,f)+\eps \quad \forall x\notin S. \ee
Denote by $\Ec^Q(f,\varepsilon)$ the set of $\varepsilon$-mild equilibria w.r.t. $(f,Q)$.
When $\varepsilon=0$, we still call $S$ a mild equilibrium and may use the notation $\Ec^Q(f)$ instead of $\Ec^Q(f,0)$.
\end{Definition}	
We also denote
$$
V^Q_\varepsilon(x,f):=\sup_{S\in \Ec^Q(f,\varepsilon)} J^Q(x,S,f)\quad \forall x\in \X,
$$
and we keep using the notation $V^Q(x,f)$ in \eqref{eq.def.V} instead of $V^Q_0(x,f)$ when $\eps=0$.

\section{Main results}\label{sec:stability}

Consider a sequence $(f^n, Q^n)_{n\in \overline \N}$, where $(f^n)_{n\in \overline \N}$ are reward functions, and $(Q^n=(\mu^n, \sigma^n))_{n\in\overline \N}$ are regular coupled functions such that, for each $n\in \overline\N$, $X$ governed by
$$
dX_t=\mu^n(X_t)dt+\sigma^n(X_t)dW_t,
$$
is supported on $\X$ for any $X_0\in\X$.
\begin{Theorem}\label{thm.continue.eps}
	Suppose Assumption \ref{assume.delta.weight} and the following hold:
\bi 
{\color{black}\item[(i)] $(Q^n)_{n\in \overline \N}$ are regular, and satisfy
	\be\label{eq.mu.sigma}
	\sup_{n\in \overline \N}\left(\|\mu^n\|_\infty+\|\sigma^n\|_\infty\right)<\infty \quad\text{and}\quad \inf_{n\in \overline \N, x\in \X} |\sigma^n(x)|^2=:L>0;
	\ee
	\item[(ii)]  $f^n\geq 0$ is continuous for any $n\in \overline\N$, and $\|f^\infty\|_{\infty}+\sup_{x,y\in \X}\frac{|f^\infty(x)-f^\infty(y)|}{|x-y|}=:K<\infty$;}
\item[(iii)] 
$
\|\mu^n-\mu^\infty\|_{\infty}+\|\sigma^n-\sigma^\infty\|_{\infty}+\|f^n-f^\infty\|_{\infty}\rightarrow 0,
$ as $n\to \infty$.
\ei 
Then
\begin{align*}
	\lim\limits_{\varepsilon\searrow 0}\Big( \liminf_{n\rightarrow \infty} V^{Q^n}_\varepsilon(x,f^n)\Big)=
	\lim\limits_{\varepsilon\searrow 0}\Big( \limsup_{n\rightarrow \infty}V^{Q^n}_\varepsilon(x,f^n)\Big)=V^{Q^\infty}(x,f^\infty)\quad \forall x\in \X.
\end{align*}	
\end{Theorem}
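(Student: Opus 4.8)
The plan is to prove the two one-sided estimates
\[
\lim_{\varepsilon\searrow0}\liminf_{n\to\infty}V^{Q^n}_\varepsilon(x,f^n)\ \ge\ V^{Q^\infty}(x,f^\infty)
\qquad\text{and}\qquad
\lim_{\varepsilon\searrow0}\limsup_{n\to\infty}V^{Q^n}_\varepsilon(x,f^n)\ \le\ V^{Q^\infty}(x,f^\infty),
\]
whereupon the elementary inequality $\liminf\le\limsup$ forces every quantity in the statement to equal $V^{Q^\infty}(x,f^\infty)$. The common engine is a \emph{uniform} stability estimate for the stopping functionals. Writing
\[
\eta_n:=\sup_{A\in\Bc}\big\|J^{Q^n}(\cdot,A,f^n)-J^{Q^\infty}(\cdot,A,f^\infty)\big\|_\infty,
\]
the first and principal goal is to show $\eta_n\to0$ under hypotheses (i)--(iii). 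I expect this to be the main obstacle, and I would approach it through the PDE/analytic route advertised in the introduction rather than a direct probabilistic coupling.

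Two features make the estimate tractable. First, since $|\sigma^n|>0$ forces $\rho_A=\rho_{\overline A}$ and, in one dimension, the exit location from the component $(a,b)\subseteq\X\setminus\overline A$ containing $x$ lies in $\{a,b\}$, the supremum over all Borel sets $A$ collapses to a supremum over the two-parameter family of intervals and their discounted two-point exit laws; this is far more rigid than general closed-set convergence. Second, Assumption \ref{assume.delta.weight} lets me write $\delta(t)=\int_0^\infty e^{-rt}F(dr)$, so that for fixed $r$ the map $u^r_\infty(x):=\E^{Q^\infty}_x[e^{-r\rho_A}f^\infty(X_{\rho_A})]$ solves the uniformly elliptic ODE $\tfrac12(\sigma^\infty)^2(u^r_\infty)''+\mu^\infty(u^r_\infty)'-ru^r_\infty=0$ on $\X\setminus\overline A$ with boundary data $f^\infty$. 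Separating the reward perturbation (bounded by $\|f^n-f^\infty\|_\infty$ since $\delta\le1$) from the coefficient perturbation, the latter error $w^r_n:=u^r_n-u^r_\infty$ solves the same operator with zero boundary data and source $h^r_n:=\tfrac12\big((\sigma^n)^2-(\sigma^\infty)^2\big)(u^r_\infty)''+(\mu^n-\mu^\infty)(u^r_\infty)'$, whence by Feynman--Kac $w^r_n(x)=\E^{Q^n}_x\big[\int_0^{\rho_A}e^{-rs}h^r_n(X_s)\,ds\big]$. Substituting the interior equation to trade $(u^r_\infty)''$ for $ru^r_\infty$ and $(u^r_\infty)'$, the zeroth-order part of $h^r_n$ contributes at most $\tfrac{K}{L}\|(\sigma^n)^2-(\sigma^\infty)^2\|_\infty$ uniformly in $(A,r)$, using $r\int_0^\infty e^{-rs}ds=1$ and $|u^r_\infty|\le K$. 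The delicate piece is the first-order part, of size $\kappa_n\|(u^r_\infty)'\|_\infty\,\E^{Q^n}_x[\int_0^{\rho_A}e^{-rs}ds]$ with $\kappa_n:=\|\mu^n-\mu^\infty\|_\infty+L^{-1}\|\mu^\infty\|_\infty\|(\sigma^n)^2-(\sigma^\infty)^2\|_\infty\to0$: here the crude bound $\E_x[\int_0^{\rho_A}e^{-rs}ds]\le1/r$ degenerates as $r\searrow0$, while the boundary gradient $\|(u^r_\infty)'\|_\infty$ grows like $\sqrt r$ for large $r$. I would close this by balancing the two effects---near $\partial A$ the Lipschitz boundary data and uniform ellipticity give, via an exponential barrier, a gradient bound uniform in $(A,r)$, while the discounted occupation time is correspondingly small there, and deep inside a large component interior gradient estimates make $(u^r_\infty)'$ small---so that the product is integrable against $F(dr)$ with a bound $o(1)\cdot\kappa_n$. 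This balancing is the technical heart of the argument.

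For the lower bound I would use the optimal (smallest) mild equilibrium $S^*:=S^*(f^\infty,Q^\infty)$ supplied by Lemma \ref{lm.optimalmild}. For $x\notin S^*$ we have $f^\infty(x)\le J^{Q^\infty}(x,S^*,f^\infty)$, so
\[
f^n(x)\le f^\infty(x)+\|f^n-f^\infty\|_\infty\le J^{Q^\infty}(x,S^*,f^\infty)+\|f^n-f^\infty\|_\infty\le J^{Q^n}(x,S^*,f^n)+\eta_n+\|f^n-f^\infty\|_\infty,
\]
and the right side is $\le J^{Q^n}(x,S^*,f^n)+\varepsilon$ once $n$ is large, i.e.\ $S^*\in\Ec^{Q^n}(f^n,\varepsilon)$. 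Hence $V^{Q^n}_\varepsilon(x,f^n)\ge J^{Q^n}(x,S^*,f^n)$, and letting $n\to\infty$ (using $\eta_n\to0$) gives $\liminf_nV^{Q^n}_\varepsilon(x,f^n)\ge J^{Q^\infty}(x,S^*,f^\infty)=V^{Q^\infty}(x,f^\infty)$ for every $\varepsilon>0$, so the $\varepsilon\searrow0$ limit is $\ge V^{Q^\infty}(x,f^\infty)$.

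For the upper bound, fix $\varepsilon>0$ and let $S\in\Ec^{Q^n}(f^n,\varepsilon)$ be arbitrary. Running the same manipulation in reverse shows $f^\infty(x)\le J^{Q^\infty}(x,S,f^\infty)+\varepsilon'_n$ for $x\notin S$, where $\varepsilon'_n:=\varepsilon+\eta_n+\|f^n-f^\infty\|_\infty\to\varepsilon$ and $\varepsilon'_n\ge\varepsilon$; thus $S\in\Ec^{Q^\infty}(f^\infty,\varepsilon'_n)$ and $J^{Q^n}(x,S,f^n)\le J^{Q^\infty}(x,S,f^\infty)+\eta_n\le V^{Q^\infty}_{\varepsilon'_n}(x,f^\infty)+\eta_n$. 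Taking the supremum over $S$ gives $V^{Q^n}_\varepsilon(x,f^n)\le V^{Q^\infty}_{\varepsilon'_n}(x,f^\infty)+\eta_n$; since $\varepsilon'\mapsto V^{Q^\infty}_{\varepsilon'}$ is nondecreasing and $\varepsilon'_n\le\varepsilon+\beta$ eventually, $\limsup_nV^{Q^n}_\varepsilon(x,f^n)\le V^{Q^\infty}_{\varepsilon+\beta}(x,f^\infty)$ for every $\beta>0$. It remains to prove the right-continuity $\lim_{\varepsilon\searrow0}V^{Q^\infty}_\varepsilon(x,f^\infty)=V^{Q^\infty}(x,f^\infty)$ for the single limiting problem; I would obtain this by taking near-optimal $\varepsilon_k$-equilibria $S_k$ with $\varepsilon_k\searrow0$, extracting a Kuratowski-convergent subsequence $S_k\to S$ (closed subsets of $\X$ being sequentially compact in this sense), and showing that $S$ is an exact mild equilibrium while $J^{Q^\infty}(x,S_k,f^\infty)\to J^{Q^\infty}(x,S,f^\infty)$, where the one-dimensional exit-location structure together with continuity of $f^\infty$ and $\delta$ stabilizes the hitting values. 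Combining, $\lim_{\varepsilon\searrow0}\limsup_nV^{Q^n}_\varepsilon(x,f^n)\le\lim_{\varepsilon\searrow0}V^{Q^\infty}_\varepsilon(x,f^\infty)=V^{Q^\infty}(x,f^\infty)$, which with the lower bound completes the proof.
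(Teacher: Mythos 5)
Your overall architecture coincides with the paper's: a uniform estimate $\eta_n=\sup_{A\in\Bc}\|J^{Q^n}(\cdot,A,f^n)-J^{Q^\infty}(\cdot,A,f^\infty)\|_\infty\to 0$ (the paper's Lemma \ref{lm}), the two set inclusions $S^*(f^\infty,Q^\infty)\in\Ec^{Q^n}(f^n,\eps)$ and $\Ec^{Q^n}(f^n,\eps)\subset\Ec^{Q^\infty}(f^\infty,\eps')$ for large $n$, and the right-continuity $\lim_{\eps\searrow0}V^{Q^\infty}_\eps=V^{Q^\infty}$ (the paper's Proposition \ref{prop0}, which the paper proves by exactly the compactness-of-nearest-boundary-points argument you sketch). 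The lower and upper bounds and the $\eps\searrow 0$ limit are handled correctly modulo the two ingredients.

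However, there is a genuine gap in your treatment of the key ingredient $\eta_n\to0$, specifically in the small-$r$ regime of the spectral decomposition $\delta(t)=\int_0^\infty e^{-rt}F(dr)$. Your plan is to bound the first-order source term by $\kappa_n\,\|(u^r_\infty)'\|_\infty\,\E^{Q^n}_x[\int_0^{\rho_A}e^{-rs}ds]$ and to make this $F(dr)$-integrable uniformly in $A$ by ``balancing'' the gradient against the discounted occupation time. This does not close near $r=0$: take $\mu\equiv0$, $\sigma\equiv1$, $A=\X\setminus(-M,M)$ and $x=0$; then $\E_x[\int_0^{\rho_A}e^{-rs}ds]$ is of order $\min\{M^2,1/r\}$ while the boundary gradient of $u^r_\infty$ is of order $\max\{1/M,\sqrt{r}\}\,\|f^\infty\|_\infty$ when the boundary data differ, so the supremum over $A$ of the product behaves like $r^{-1/2}$ as $r\searrow0$, and $\int_{(0,r_0]}r^{-1/2}F(dr)$ can be infinite for a general cdf $F$ (Assumption \ref{assume.delta.weight} imposes no moment condition on $F$ near $0$). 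No interior gradient estimate or barrier rescues a bound that is simultaneously uniform in $A$ and $F$-integrable in $r$. The paper's resolution is much simpler and you should adopt it: since $\lim_{t\to\infty}\delta(t)=0$ forces $F(0)=0$, right-continuity of $F$ gives $r_0>0$ with $F(r_0)\leq\eps/(4\|f^\infty\|_\infty)$, and the entire contribution of $r\in[0,r_0]$ is bounded crudely by $2\|f^\infty\|_\infty F(r_0)\leq\eps/2$ without any PDE estimate; the elliptic/Feynman--Kac machinery is then only needed for $r>r_0$, where the occupation-time bound $1/r\leq1/r_0$ is harmless and the gradient bound $\widetilde M(1+r)$ (note: linear in $r$, not $\sqrt r$) yields an error of order $(1+1/r_0)(\|\mu^n-\mu^\infty\|_\infty+\|\sigma^n-\sigma^\infty\|_\infty)$ uniformly in $A$ and $r$. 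With that substitution your proof is essentially the paper's.
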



\begin{Theorem}\label{thm.semi.continue}
	Suppose the assumptions in Theorem \ref{thm.continue.eps} hold. Then
	\begin{equation}\label{e2}
		\limsup_{n\to \infty} V^{Q^n}(x,f^n)\leq V^{Q^\infty}(x, f^\infty)\quad \forall x\in \X.
	\end{equation}
\end{Theorem}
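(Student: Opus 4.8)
The plan is to exploit that, by Lemma \ref{lm.optimalmild}, the supremum defining $V^{Q^n}$ is attained at the smallest equilibrium: writing $S^n:=S^*(f^n,Q^n)\in\Ec^{Q^n}(f^n)$, we have $V^{Q^n}(x,f^n)=J^{Q^n}(x,S^n,f^n)$ for every finite $n$ (the hypotheses of Lemma \ref{lm.optimalmild} hold for each $n$ since (ii)--(iii) give $\sup_n\|f^n\|_\infty<\infty$). Fix $x\in\X$, set $\ell:=\limsup_n V^{Q^n}(x,f^n)$, and pass to a subsequence along which $V^{Q^n}(x,f^n)\to\ell$. Since $\X$ is locally compact and second countable, the collection of closed subsets of $\X$ is sequentially compact for Kuratowski convergence, so after a further subsequence (not relabelled) $S^n\to\Sk$ for some closed $\Sk\subset\X$. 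The goal is to show that $\Sk$ is a mild equilibrium for $(f^\infty,Q^\infty)$ and that $\ell=J^{Q^\infty}(x,\Sk,f^\infty)$, which immediately yields $\ell\le V^{Q^\infty}(x,f^\infty)$.

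The heart of the matter, and the step I expect to be the main obstacle, is the value convergence
\bee
J^{Q^n}(y,S^n,f^n)\longrightarrow J^{Q^\infty}(y,\Sk,f^\infty)\qquad\text{for every } y\in\X.
\eee
Here the one-dimensional structure is essential: for $y$ in a connected component $(a,b)$ of $\X\setminus\Sk$, the hitting time $\rho_{\Sk}$ started at $y$ is the exit time of $(a,b)$, so $J^{Q^\infty}(y,\Sk,f^\infty)$ is governed by a two-point boundary value problem with data $f^\infty(a),f^\infty(b)$; the same holds for each $S^n$ with endpoints $a^n:=\sup(\{z\in S^n:z<y\}\cup\{\inf\X\})$ and $b^n:=\inf(\{z\in S^n:z>y\}\cup\{\sup\X\})$. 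Kuratowski convergence forces $a^n\to a$ and $b^n\to b$: the inner-limit inclusion supplies points of $S^n$ approaching the endpoints of $\Sk$, while the outer-limit inclusion prevents points of $S^n$ from accumulating inside $(a,b)$. Decomposing $\delta(t)=\int_0^\infty e^{-rt}F(dr)$, one writes $J^{Q^n}(y,S^n,f^n)=\int_0^\infty u^n_r(y)\,F(dr)$, where $u^n_r(y):=\E^{Q^n}_y[e^{-r\rho_{S^n}}f^n(X_{\rho_{S^n}})]$ solves $\tfrac12(\sigma^n)^2(u^n_r)''+\mu^n(u^n_r)'-r\,u^n_r=0$ on $(a^n,b^n)$ with boundary values $f^n$. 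Uniform ellipticity (the bound $L>0$ in \eqref{eq.mu.sigma}) together with the uniform convergence of $\mu^n,\sigma^n,f^n$ and of the endpoints $a^n,b^n$ gives, for each fixed $r\ge0$, $u^n_r(y)\to u^\infty_r(y)$; since $0\le u^n_r\le\sup_m\|f^m\|_\infty<\infty$, dominated convergence under $F(dr)$ then yields the claim. The delicate cases are those where a component reaches $\partial\X$, so the exit time may be infinite; there $\delta(\infty)=0$ ensures escaping paths contribute nothing, and the same PDE-stability argument applies after a truncation. For $y\in\Sk$ one instead uses $\rho_{S^n}\le\rho_{\{z^n\}}\to0$ for points $z^n\in S^n$ with $z^n\to y$, so the value converges to $f^\infty(y)=J^{Q^\infty}(y,\Sk,f^\infty)$.

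Granting this convergence, the conclusion is quick. Fix $y\notin\Sk$; because $\Sk$ is the outer Kuratowski limit, $y\notin S^n$ for all large $n$, so the equilibrium property of $S^n$ gives $f^n(y)\le J^{Q^n}(y,S^n,f^n)$. Letting $n\to\infty$ and using the value convergence together with $f^n\to f^\infty$ uniformly yields $f^\infty(y)\le J^{Q^\infty}(y,\Sk,f^\infty)$, whence $\Sk\in\Ec^{Q^\infty}(f^\infty)$. Finally, evaluating the value convergence at the fixed point $y=x$ gives $\ell=J^{Q^\infty}(x,\Sk,f^\infty)\le\sup_{S\in\Ec^{Q^\infty}(f^\infty)}J^{Q^\infty}(x,S,f^\infty)=V^{Q^\infty}(x,f^\infty)$ by \eqref{eq.def.V}, which is precisely \eqref{e2}.
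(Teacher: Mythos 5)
Your argument is correct in outline, but it takes a genuinely different route from the paper. The paper never extracts a limiting stopping set: it shows via Lemma \ref{lm} that $S^*(f^n,Q^n)\in\Ec^{Q^\infty}_\eps(f^\infty)$ for all large $n$, bounds $V^{Q^n}(x,f^n)=J^{Q^n}(x,S^*(f^n,Q^n),f^n)\le\sup_{S\in\Ec^{Q^\infty}_\eps(f^\infty)}J^{Q^n}(x,S,f^n)$, passes to the limit in $n$ using the \emph{uniformity in $S$} of Lemma \ref{lm}, and then sends $\eps\searrow0$ via Proposition \ref{prop0}; so the theorem falls out of exactly the two ingredients already assembled for Theorem \ref{thm.continue.eps}. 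You instead compactify the family $\{S^*(f^n,Q^n)\}$ in the Fell/Kuratowski sense, identify a limit set $\Sk$, show $\Sk\in\Ec^{Q^\infty}(f^\infty)$, and prove the joint continuity $J^{Q^n}(\cdot,S^n,f^n)\to J^{Q^\infty}(\cdot,\Sk,f^\infty)$ along moving sets. This buys slightly more than \eqref{e2} — it exhibits an actual equilibrium of the limit problem attaining the $\limsup$ — but it concentrates all the technical weight in the moving-domain convergence, which you state rather than prove: the endpoint convergence $a^n\to a$, $b^n\to b$ and the resulting stability of the two-point boundary value problems is essentially a re-derivation of the interval-limit argument inside the paper's proof of Proposition \ref{prop0} combined with the content of Lemma \ref{lm}, and the cases $\Sk=\emptyset$ or a component abutting $\partial\X$ (possibly infinite exit time) need the explicit treatment you only gesture at (there $F(\{0\})=0$ and $\delta(\infty)=0$ do save you, and $\Sk=\emptyset$ forces $f^\infty\equiv 0$, so nothing breaks). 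If you filled in that step carefully — in particular the a.s.\ continuity of exit times in the interval endpoints for a regular diffusion — the proof would stand; the paper's $\eps$-equilibrium detour is simply a way of avoiding set-convergence altogether.
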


\begin{Remark}
	Exact continuity in \eqref{e2} may fail in general. See the example in Section 4.1.
\end{Remark}

\begin{Remark}
	By an argument similar to that in \cite[Remark 4.2]{bayraktar2022stability}, we have that\footnote{The lower/upper limit of a sequence of sets is defined in a usual way. That is, for a sequence of sets $(A_n)_{n\in \overline\N}$, 
	$$\liminf_{n\to\infty} A_n:=\underset{n\in \N}{\cup} \underset{k\geq n}{\cap} A_k\quad\text{and}\quad \limsup_{n\to\infty} A_n:=\underset{n\in \N}{\cap} \underset{k\geq n}{\cup}  A_k.$$
	We say the sequence of sets $(A_n)_{n\in \overline\N}$ is lower (resp. upper) semi-continuous if $A_\infty\subset \liminf_{n\to\infty} A_n$ (resp. $A_\infty\supset\limsup_{n\to\infty} A_n$).}
	$$
	\Ec^{Q^\infty}(f^\infty)=\lim_{\eps \searrow 0} \Big( \liminf_{n\rightarrow \infty} \Ec^{Q^n}_\eps (f^n)  \Big)= \lim_{\eps \searrow 0} \Big( \limsup_{n\rightarrow \infty} \Ec^{Q^n}_\eps (f^n)  \Big).
	$$
\end{Remark}

\subsection{Proofs of Theorems \ref{thm.continue.eps} and \ref{thm.semi.continue}}
To begin with, we first fix an arbitrary $(f,Q)$ and study the relation between $V^Q_\eps(\cdot, f)$ and $V^Q(\cdot, f)$.

\begin{Proposition}\label{prop0}
	Suppose that $f$ is continuous with $\|f\|_\infty<\infty$ and $Q=(\mu,\sigma)$ is regular. Then
\be\label{eq.converge.eps} 
V^Q(x,f)=\lim_{\eps \searrow 0}V^Q_\eps(x,f)\quad \forall x\in \X. 
\ee 
\end{Proposition}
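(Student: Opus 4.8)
The plan is to prove the two inequalities separately. The bound $V^Q(x,f)\le \lim_{\eps\searrow 0}V^Q_\eps(x,f)$ is immediate: the defining condition \eqref{eq.def.epsout} is monotone in $\eps$, so $\Ec^Q(f)=\Ec^Q(f,0)\subset \Ec^Q(f,\eps)$ for every $\eps>0$, whence $\eps\mapsto V^Q_\eps(x,f)$ is nondecreasing and bounded below by $V^Q(x,f)$; the limit therefore exists and dominates $V^Q(x,f)$. The substance is the reverse inequality $L(x):=\lim_{\eps\searrow 0}V^Q_\eps(x,f)\le V^Q(x,f)$. Fixing $x_0\in\X$, I would select $\eps_n\searrow 0$ and near-optimal $\eps_n$-mild equilibria $S_n\in\Ec^Q(f,\eps_n)$ with $J^Q(x_0,S_n,f)\ge V^Q_{\eps_n}(x_0,f)-\eps_n$, so that $J^Q(x_0,S_n,f)\to L(x_0)$. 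The goal is then to extract a genuine mild equilibrium $S$ from $(S_n)$ with $J^Q(x_0,S,f)=L(x_0)$, which forces $V^Q(x_0,f)\ge L(x_0)$.

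The extraction rests on the one-dimensional structure. For a closed set $S$ and $x\notin S$, the diffusion must leave the connected component $(a,b)$ of $\X\setminus S$ containing $x$ before reaching any other part of $S$, so $\rho_S=\rho_{\{a,b\}}$ and $X_{\rho_S}\in\{a,b\}$ on $\{\rho_S<\infty\}$; hence $J^Q(x,S,f)$ depends on $S$ only through the endpoints $a,b$ and the values $f(a),f(b)$. Since the closed subsets of $\X$ are compact under Kuratowski convergence (Fell topology), I would pass to a subsequence along which $S_n\to S$ in the sense of the set limits recalled in the footnote above, with $S$ closed. The key point is convergence of the exit intervals: if $x\notin S$ lies in the component $(a,b)$ of $\X\setminus S$ and $(a_n,b_n)$ is the component of $\X\setminus S_n$ containing $x$, then $a_n\to a$ and $b_n\to b$. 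This holds because the endpoints $a_n,b_n\in S_n$ can accumulate only in $\limsup_n S_n=S$, while $S\cap(a,b)=\emptyset$ pins the limits to $a$ and $b$ (using $b\in\liminf_n S_n$ to get $\limsup_n b_n\le b$, and $\lim_k b_{n_k}\in S$ to rule out a limit strictly below $b$). An analogous statement holds when $x_0\in S$, where the surrounding intervals collapse to $\{x_0\}$.

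The analytic core is the continuity of the exit functional with respect to the domain: for the fixed regular $Q$, whenever $a_n\to a$, $b_n\to b$ and $x$ stays interior,
\[
\E^Q_x\!\left[\delta(\rho_{\{a_n,b_n\}})f(X_{\rho_{\{a_n,b_n\}}})\right]\longrightarrow \E^Q_x\!\left[\delta(\rho_{\{a,b\}})f(X_{\rho_{\{a,b\}}})\right].
\]
I would establish this by writing $\delta(t)=\int_0^\infty e^{-rt}F(dr)$ and, for each $r$, representing $x\mapsto \E^Q_x[e^{-r\rho_{\{a,b\}}}g(X_{\rho_{\{a,b\}}})]$ as the solution of the boundary value problem $\tfrac12\sigma^2 u''+\mu u'=ru$ on $(a,b)$ with boundary data $g$; non-degeneracy $|\sigma|>0$ together with the Lipschitz regularity of $Q$ yields continuous dependence of this solution (equivalently of the scale function and exit distribution) on the endpoints, and boundedness of $f$ permits integration over $F(dr)$ by dominated convergence. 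Granting this, $J^Q(x,S_n,f)\to J^Q(x,S,f)$ for every $x\notin S$, and $J^Q(x_0,S_n,f)\to f(x_0)=J^Q(x_0,S,f)$ when $x_0\in S$ (using $J^Q(y,S,f)=f(y)$ on $S$, i.e.\ $\delta(0)=1$). Passing to the limit in $f(x)\le J^Q(x,S_n,f)+\eps_n$ shows $S\in\Ec^Q(f)$, while the same convergence at $x_0$ gives $J^Q(x_0,S,f)=L(x_0)$; combining the two yields $V^Q(x_0,f)\ge L(x_0)$ and completes the proof.

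I expect the domain-continuity of the exit functional, together with the boundary cases where an endpoint $a_n$ or $b_n$ escapes to $\partial\X$, to be the main obstacle: when a boundary point is unattainable the corresponding exit time is infinite and must be shown to contribute nothing, which I would control via $\delta(\infty)=0$ and $\|f\|_\infty<\infty$. This is precisely where the PDE/scale-function stability for the fixed operator $Q$ does the real work.
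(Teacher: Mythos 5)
Your argument is correct and follows the same overall strategy as the paper: pass from near-optimizing $\eps_n$-equilibria to a limiting closed set, prove that set is a genuine mild equilibrium by continuity of $J^Q$ in the interval endpoints, and conclude that it attains $\lim_{\eps\searrow 0}V^Q_\eps(x_0,f)$. The execution differs in two places, both of which the paper handles more economically. First, instead of taking a Kuratowski (Fell) limit of the whole sets $S_k$ and then verifying the equilibrium inequality on every component of $\X\setminus S$, the paper replaces each $S_k$ by the larger set $\X\setminus(l_k,r_k)$, where $(l_k,r_k)$ is the component of $\X\setminus S_k$ containing $x_0$; since $J^Q(x_0,S_k,f)=J^Q(x_0,\X\setminus(l_k,r_k),f)$ by the one-dimensional hitting structure, one only ever tracks a single interval, and the limit candidate $\X\setminus(l,r)$ has exactly one complementary component on which the inequality must be checked. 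Second, the domain-continuity of the exit functional, which you propose to derive from PDE/scale-function stability for each $r$ followed by integration against $F(dr)$, is obtained in the paper by a direct dominated convergence argument on $\delta(\rho_{\{l_{k_j},r_{k_j}\}})f(X_{\rho})$, using monotone convergence of the endpoints, path continuity, and $\|f\|_\infty<\infty$; this also absorbs the boundary case you flag (an endpoint escaping to $\partial\X$, where $\delta(\infty)=0$ kills the contribution). Your route is heavier but sound; the paper's single-interval reduction is the trick worth internalizing, as it is what makes the limit-set verification essentially one-dimensional in bookkeeping as well as in probability.
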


\begin{proof}
We prove \eqref{eq.converge.eps} by contradiction. For any $\varepsilon>0$, $\Ec^Q(f)\subset \Ec_{\eps}^Q(f)$ implies that $V^Q(\cdot ,f)\leq V^Q_\varepsilon(\cdot, f)$. Suppose there exists $x_0\in \X$ such that 
\be\label{eq.contradict.0}  
\limsup_{\eps\searrow 0}V^Q_\eps(x_0, f)-V^Q(x_0, f)=\alpha>0.
\ee 
Then there exists a sequence $(\eps_k, S_k)_{k\in \N}$ such that $\eps_k\searrow 0$, $S_k\in \Ec_{\eps_k}^Q(f)$ are closed, and 
\be\label{eq.contradict.1} 
J^Q(x_0, S_k, f)-V^Q(x_0, f)\geq \frac{\alpha}{2} \quad \forall k\in \N.
\ee 
For any $k\in \N$, we have $x_0\notin S_k$, for otherwise $J^Q(x_0, S_k,f)=f(x_0)\leq V^Q(x_0, f)$, which contradicts \eqref{eq.contradict.1}. Define
$$
l_k:=\sup\{y<x_0: y\in S_k\},\quad r_k:=\inf\{y>x_0: y\in S_k\},\quad \forall k\in \N.
$$
Now consider the sequence $(l_k)_{k\in \N}$. If $(l_k)_{k\in \N}$ is bounded, then we take a subsequence $(l_{k_j})_{j\in \N}$ such that $\lim_{j\to\infty}l_{k_j}=l$ for some constant $l\leq x_0$. Otherwise, we take a subsequence $(l_{k_j})_{j\in \N}$ that tends to $l=-\infty$. Similarly, for the subsequence $(r_{k_j})_{j\in \N}$, find a further subsequence, which we still denote as $(r_{k_j})_{j\in \N}$, such that $r_{k_j}$ either converges to a constant or tends to $\infty$, and we use $r$ to denote the limit no matter which case it is. Hence, we find a sequence of intervals $((l_{k_j}, r_{k_j}))_{j\in \N}$ that converges to interval $(l,r)$. Notice that $(l_{k_j})_{j\in \N}, (r_{k_j})_{j\in \N}$ can be chosen to be monotone, so for any $y\in (l,r)$, 
$$
y\in (l_{k_j}, r_{k_j}),  \; \text{and}\; J^Q(y, S_{k_j},f)=J^Q(y, \X\setminus (l_{k_j}, r_{k_j}),f), \text{for $j$ large enough.}
$$
Now fix $y\in (l,r)$. By $S_{k_j}\in \Ec_{\eps_{k_j}}^Q(f)$ and the dominated convergence theorem, we have that
$$
J^Q(y, \X\setminus (l,r),f)=\lim_{j\to\infty } \left(J^Q(y, \X\setminus (l_{k_j}, r_{k_j}))+\eps_{k_j} \right)\geq f(y)\quad \forall y\in (l,r).
$$
Hence, $\X\setminus (l,r)\in \Ec^Q(f)$. Then it follows from $l\leq x_0\leq r$ that
$$
V^Q(x_0,f)\geq J^Q(x_0, \X\setminus (l,r),f)=\lim_{j\to \infty} J^Q(x_0, \X\setminus (l_{k_j}, r_{k_j}),f)=\lim_{j\to \infty} V^Q_{\eps_{k_j}}(x, f),
$$
which contradicts \eqref{eq.contradict.0}. 
\end{proof}

Next, let us go back to the sequence $((f^n, Q^n))_{n\in \overline \N}$, and introduce the following Lemma.
\begin{Lemma}\label{lm}
	Suppose the assumptions in Theorem \ref{thm.continue.eps} hold. Then
	$$
	\lim_{n\rightarrow\infty} \sup_{x\in \X, S\in \Bc} |J^{Q^n}(x,S,f^n)-J^{Q^\infty}(x,S,f^\infty)|=0.
	$$
\end{Lemma}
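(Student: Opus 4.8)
The plan is to control the two sources of discrepancy---the reward function and the diffusion coefficients---separately. By the triangle inequality, $|J^{Q^n}(x,S,f^n)-J^{Q^\infty}(x,S,f^\infty)|$ is at most $|J^{Q^n}(x,S,f^n)-J^{Q^n}(x,S,f^\infty)|+|J^{Q^n}(x,S,f^\infty)-J^{Q^\infty}(x,S,f^\infty)|$. Since $\delta\leq 1$, the first term is bounded by $\E^{Q^n}_x[\delta(\rho_S)|f^n-f^\infty|(X_{\rho_S})]\leq\|f^n-f^\infty\|_\infty$, which tends to $0$ uniformly in $(x,S)$ by assumption (iii). Thus it remains to prove $\sup_{x,S}|J^{Q^n}(x,S,f^\infty)-J^{Q^\infty}(x,S,f^\infty)|\to 0$ with the reward fixed at the bounded Lipschitz function $f^\infty$.

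Here I would exploit the one-dimensional structure. For $x\in\overline S$ both values equal $f^\infty(x)$, so the difference vanishes; for $x$ in a connected component (gap) $(a,b)$ of $\X\setminus\overline S$ one has $\rho_S=\rho_{\{a,b\}}$ and $X_{\rho_S}\in\{a,b\}$, so everything reduces to a boundary value problem on the single interval $(a,b)$. Using Assumption \ref{assume.delta.weight} and Fubini, write $J^Q(x,S,f^\infty)=\int_0^\infty u^Q_r(x)\,F(dr)$ with $u^Q_r(x):=\E^Q_x[e^{-r\rho_S}f^\infty(X_{\rho_S})]$; the function $u^Q_r$ is the bounded solution of the Dirichlet problem $\Lc^Q u^Q_r=r u^Q_r$ on $(a,b)$ with $u^Q_r=f^\infty$ at the endpoints, where $\Lc^Q:=\tfrac12\sigma^2\partial_{xx}+\mu\partial_x$, and in particular $0\leq u^Q_r\leq K$. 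The difference $w_r:=u^{Q^n}_r-u^{Q^\infty}_r$ vanishes at the endpoints and solves $\Lc^{Q^n}w_r-r w_r=-D_n u^{Q^\infty}_r$, where $D_n:=\Lc^{Q^n}-\Lc^{Q^\infty}$, so Feynman--Kac gives $w_r(x)=\E^{Q^n}_x[\int_0^{\rho_S}e^{-rt}(D_n u^{Q^\infty}_r)(X_t)\,dt]$. Eliminating the second derivative via $(u^{Q^\infty}_r)''=\tfrac{2}{(\sigma^\infty)^2}(r u^{Q^\infty}_r-\mu^\infty (u^{Q^\infty}_r)')$ rewrites the source as $D_n u^{Q^\infty}_r=\theta_n\,r\,u^{Q^\infty}_r+\beta_n\,(u^{Q^\infty}_r)'$, with $\|\theta_n\|_\infty\leq L^{-1}\|(\sigma^n)^2-(\sigma^\infty)^2\|_\infty$ and $\|\beta_n\|_\infty\leq\|\mu^n-\mu^\infty\|_\infty+\|\theta_n\|_\infty\|\mu^\infty\|_\infty$, both tending to $0$ by (i) and (iii).

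The zeroth-order part is harmless: since $r\int_0^{\rho_S}e^{-rt}dt\leq 1$ and $u^{Q^\infty}_r\leq K$, it contributes at most $\|\theta_n\|_\infty K$ to $|w_r(x)|$ uniformly in $(x,S,r)$, hence at most $\|\theta_n\|_\infty K$ after integrating $F(dr)$. The key regularity input for the gradient part is the energy estimate obtained by applying It\^o's formula to $e^{-2rt}(u^{Q^\infty}_r)^2$: using $\Lc^{Q^\infty}u^{Q^\infty}_r=r u^{Q^\infty}_r$ and $(\sigma^\infty)^2\geq L$, the drift is $e^{-2rt}(\sigma^\infty)^2((u^{Q^\infty}_r)')^2\geq 0$, and since the boundary value is at most $K$ one obtains $\E^{Q^\infty}_x[\int_0^{\rho_S}e^{-2rt}((u^{Q^\infty}_r)')^2(X_t)\,dt]\leq K^2/L$, uniformly in $(x,S,r)$.

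The main obstacle is to turn this $L^2$-in-time gradient bound into a uniform estimate for the gradient contribution $\|\beta_n\|_\infty\,\E^{Q^n}_x[\int_0^{\rho_S}e^{-rt}|(u^{Q^\infty}_r)'(X_t)|\,dt]$, integrated against $F(dr)$, simultaneously over all gaps $(a,b)$---including unbounded ones---and all rates $r$. Two issues must be reconciled. First, the energy estimate is cleanest under $Q^\infty$ whereas the representation of $w_r$ runs under $Q^n$; since $Q^n\to Q^\infty$ the two occupation measures are close, and I would transfer the estimate by absorbing the resulting lower-order term (which itself carries a factor $\|\beta_n\|_\infty$) for $n$ large. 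Second, and more delicately, a naive Cauchy--Schwarz passage from the $L^2$ bound to the $L^1$ quantity produces a factor $r^{-1/2}$ that need not be $F$-integrable near $r=0$; the remedy is to exploit the boundary-layer decay of $u^Q_r$ on long and unbounded gaps---so that the gradient is in fact small precisely where the expected discounted occupation time is large---together with the structure of $\delta$ encoded in $\int_0^\infty\frac{1-e^{-rt}}{r}F(dr)=\int_0^t\delta(s)\,ds$ to keep the $r$-integral under control. Once the gradient contribution is shown to be dominated by $\|\beta_n\|_\infty$ times a constant depending only on $K$ and $L$, letting $n\to\infty$ yields the claimed uniform convergence.
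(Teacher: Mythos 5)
Your overall architecture matches the paper's: split off $\|f^n-f^\infty\|_\infty$ by the triangle inequality, write $J^Q(x,S,f^\infty)=\int_0^\infty v_r(x)\,F(dr)$ with $v_r$ solving a Dirichlet problem on each gap of $\X\setminus\overline S$, represent the difference of the two solutions via Feynman--Kac with a source term of size $O(\|\mu^n-\mu^\infty\|_\infty+\|\sigma^n-\sigma^\infty\|_\infty)$ times derivative bounds on $v_r$, and integrate in $r$. Your elimination of the second derivative through the equation itself, and the It\^o energy estimate $\E^{Q^\infty}_x[\int_0^{\rho_S}e^{-2rt}((u^{Q^\infty}_r)')^2\,dt]\le K^2/L$, are a legitimate alternative to the paper's route, which instead derives a \emph{pointwise} bound $|(v^n_r)'|+|(v^n_r)''|\le \widetilde M(1+r)$ on each gap (after a scale-function change of variables killing the drift, reading off $|\tilde u''|\le 2rK/L$ from the ODE, and getting $|\tilde u'|$ from the mean value theorem on a subinterval of length at most one). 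The pointwise bound sidesteps both of the difficulties you flag: it makes the source $g$ uniformly small in sup norm, so no change of occupation measure between $Q^n$ and $Q^\infty$ is needed, and no Cauchy--Schwarz in $t$ is required.

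However, your write-up has a genuine gap exactly where you admit it does: the small-$r$ region. The $r^{-1/2}$ (or, in the paper's version, $r^{-1}$) factor coming from $\int_0^\infty e^{-rt}dt$ is indeed not $F$-integrable near $r=0$ in general, and your proposed remedy --- ``boundary-layer decay of $u^Q_r$ on long gaps'' combined with the identity $\int_0^\infty\frac{1-e^{-rt}}{r}F(dr)=\int_0^t\delta(s)\,ds$ --- is only a gesture; it is not carried out, and it is not clear it can be made uniform over all gaps, all $S\in\Bc$, and all $x$. The paper closes this hole with a much simpler observation that you missed: since $\delta(t)\to 0$ forces $F(0)=0$, right-continuity of $F$ gives an $r_0>0$ with $F(r_0)\le \eps/(4\|f^\infty\|_\infty)$, and on $[0,r_0]$ one simply uses the trivial bound $|v^n_r-v^\infty_r|\le 2\|f^\infty\|_\infty$, contributing at most $\eps/2$ uniformly in $n$, $x$, $S$. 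For $r\ge r_0$ the offending factor is bounded by $r_0^{-1/2}$ (resp.\ $r_0^{-1}$) and all your estimates go through. With that cutoff inserted, your argument becomes viable, though you would still need to write out the transfer of the energy estimate from the $Q^\infty$-occupation measure to the $Q^n$-one (or, more simply, derive the energy estimate directly under $Q^n$ for $v^n_r$ and put the source on the other side, as the paper effectively does by estimating $(v^n_r)'$ and $(v^n_r)''$ rather than $(v^\infty_r)'$).
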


\begin{proof}
	By assumptions, for any $S\in \Bc$,
\begin{align*}
	|J^{Q^n}(x,S,f^n)-J^{Q^\infty}(x,S,f^\infty)|\leq & \|f^n-f^\infty\|_{\infty}+|J^{Q^n}(x,S,f^\infty)-J^{Q^\infty}(x,S,f^\infty)|.
\end{align*}
To prove the desired result, it is sufficient to show the convergence of the second term above. To this end, fix $\eps>0$ and we will find $N$ such that 
\be\label{eq.lm.unifconverge} 
\sup_{x\in \X,S\in \Bc}|J^{Q^n}(x,S,f^\infty)-J^{Q^\infty}(x,S,f^\infty)|\leq \eps\quad \forall n\geq N.
\ee

Take an arbitrary $S\in \Bc$. For each $n\in \overline \N$ and $r\in(0,\infty)$, set $v_r^n(x):=\E^{Q^n}_x[e^{-r \rho_S} f^\infty(X_{\rho_S})]$. Recall the cumulative function $F(r)$ in Assumption \ref{assume.delta.weight} and the constants $K, L$ in the assumptions
of Theorem \ref{thm.continue.eps}. As $\lim_{t\to\infty}\delta(t)=0$, $F(0)=0$. By the right-continuity of function $F$,
there exists $r_0>0$ such that 
\be\label{eq.lm.r} 
F(r_0)\leq \frac{\eps}{4\|f^\infty\|_{\infty}}.
\ee
We proceed with the rest of the proof in three steps. 

\textbf{Step 1.} We first focus on the case $0<r\leq r_0$. Notice that $\|v_r^n\|_\infty\leq \|f^\infty\|_{\infty}$ for any $n\in \overline \N$, then by \eqref{eq.lm.r}, 
\be\label{eq.r.small} 
\int_{[0,{r_0}]}| v^n_r-v^\infty_r| F(dr)\leq \left( \sup_{0<r\leq r_0, n\in \overline \N} 2\|v^n_r\|_{\infty}  \right) \cdot F(r_0)\leq 2 \|f^\infty\|_{\infty}\cdot \frac{\eps}{4\|f^\infty\|_{\infty}}=\frac{\eps}{2}.
\ee 

\textbf{Step 2.} Pick an arbitrary $r> r_0$. We first construct a bound for $\sup_{x\in \X\setminus \overline{S}} |(v^n_r)'|+|(v^n_r)''|$. 
The Lipschitz continuity and boundedness of $\mu^n, \sigma^n$ imply H\"older continuity. Then, given an interval $(a,b)\subset \X\setminus \overline S$ with $0<b-a\leq 1$, {\color{black}it is known (see, e.g. \cite[Theorem 9.2.14]{MR2001996})} that $v^n_r(x)$ is twice continuously differentiable and satisfies
\be\label{eq.r.elliptic} 
	-rv^n_r(x)+\mu^n(x)(v^n_r(x))'+\frac{1}{2}(\sigma^n)^2(x)(v^n_r(x))''=0 \quad x\in (a,b).
\ee 
Write $\X\setminus \overline{S}=: \cup_{i\in \N} (\theta_i, \theta_{i+1})$. Take an arbitrary $x_0\in \X\setminus \overline{S}$. There are two cases: (I) for the case $x_0\in (\theta_i, \theta_{i+1})$ with $\theta_{i+1}-\theta_i\leq 1$, we set $a=\theta_i, b=\theta_{i+1}$; (II) for the case $x_0\in (\theta_i, \theta_{i+1})$ with $\theta_{i+1}-\theta_i>1$ (notice that $\theta_i, \theta_{i+1}$ can be $-\infty, \infty$ respectively), we take $a,b$ such that $\theta_i\leq a<x_0<b\leq  \theta_{i+1}$ with $b-a=1$. Let $v^n_r(x)=\tilde{u}(\phi(x))$, where $\phi(x):=\int_0^x\exp(-\int_0^l \frac{2\mu^n(z)}{(\sigma^n)^2(z)}dz)dl$. Then \eqref{eq.r.elliptic} leads to 
\be\label{eq.tildeu.elliptic} 
\begin{cases}
	-r\tilde{u}(y)+\frac{1}{2}\tilde{\sigma}^2(y)\tilde{u}''(y)=0 \quad y\in (\phi(a),\phi(b)),\\
	\tilde{u}(\phi(a))=v^n_r(a), \tilde{u}(\phi(b))=v^n_r(b),
\end{cases}
\ee
where $\tilde{\mu}(y)=\mu^n(\phi^{-1}(y))$ and $\tilde{\sigma}(y)=\sigma^n(\phi^{-1}(y))\phi'(\phi^{-1}(y))$. The boundedness of $\sup_n \|v^n_r\|_\infty$ gives the uniform boundedness of $\tilde{u}$ over $n$. Then the first line in \eqref{eq.tildeu.elliptic} together with \eqref{eq.mu.sigma} gives that
$
|\tilde{u}''|\leq \frac{2r K}{L}\ \text{on}\; (\phi(a), \phi(b)).
$	
Then a direct calculation along with $|b-a|\leq 1$ and the uniform boundedness of $(Q^n)_{n\in\overline \N}$ shows that
\be\label{eq.lm.phi} 
\sup_{x\in [a,b]} \Big( |\phi'(x)|+|\phi''(x)|+|\phi'(x)|^{-1} \Big) :=M<\infty,
\ee
where the constant $M$ depends on $K,L$ but does not depend on $n,r,S$.
By Mean Value Theorem, for both cases (I)\&(II), the second line in \eqref{eq.tildeu.elliptic} together with \eqref{eq.lm.phi} gives that 
$$
\begin{aligned}
	|\tilde u'({y_0})|=\left|\frac{v^n_r(b)-v^n_r(a)}{b-a}\cdot \frac{b-a}{\phi(b)-\phi(a)}\right|
	\leq \max \left\{ \sup_{x,y\in \X}\frac{|f^\infty(x)-f^\infty(y)|}{|x-y|}, \frac{2 \|f\|_\infty}{1} \right\}M
	\leq 2KM,
\end{aligned}
$$ 
for some point $y_0\in (\phi(a), \phi(b))$. Hence, as $x_0\in (a,b)$ and $|b-a|\leq 1$,
$$
|\tilde{u}'(\phi(x_0))|\leq 	|\tilde u'({y_0})|+|\int_{y_0}^{\phi(x_0)} \tilde{u}''(y)dy  |\leq 2KM+\int_0^{M} \frac{2r K}{L}dl\leq \widetilde{M}(1+r), 
$$
{\color{black} where $\widetilde{M}$ is a constant that depends on $K,L$ but does not depend on $n,r,S$, and may change from line to line during the rest of the proof.} In sum, $|\tilde{u}'(\phi(x_0))|+|\tilde{u}''(\phi(x_0))|\leq \widetilde{M}(1+r)$, then by \eqref{eq.lm.phi} again, $|(v^n_r)'(x_0)|+|(v^n_r)''(x_0)|\leq \widetilde{M}(1+r)$.
As $x_0$ is arbitrary, 
\be\label{eq.lm.3}  
\sup_{x\in \X\setminus \overline{S}, n\in \overline \N} |(v^n_r)'(x)|+|(v^n_r)''(x)|\leq \widetilde{M}(1+r).
\ee 	
Now we estimate $|v^\infty_r(\cdot)- v^n_r(\cdot)|$ for $r\geq r_0$. Take $n<\infty$ and set $\bar{v}_r:= v^\infty_r-v^n_r$. Since
$$
-r v_r^n(x)+\mu^n(x) (v_r^n)' (x)+\frac{1}{2}(\sigma^n(x))^2 (v^n_r)''(x)=0\quad \text{on}\; \X\setminus \overline{S},\quad \forall n\in \overline \N,
$$
we have that
\be\label{eq.lm.1} 
-r \bar{v}_r(x)+\mu^\infty(x) (\bar{v}_r)'(x)+\frac{1}{2}(\sigma^\infty(x))^2 (\bar{v}_r)''(x)+g(x)=0\quad \text{on } \X\setminus \overline{S},
\ee 
where
\be\label{eq.lm.1'} 
g(x):=(\mu^\infty(x)-\mu^n(x)) (v^n_r)'(x)+\frac{1}{2}[(\sigma^\infty(x))^2-(\sigma^n(x))^2] (v^n_r)''(x).
\ee 
Meanwhile, $\bar{v}_r$ in \eqref{eq.lm.1} has the following probabilistic representation
\be\label{eq.lm} 
\bar{v}_r(x)=\E^{Q^\infty}_x\left[\int_0^{\rho_{X\setminus \overline{S}}} e^{-rs}g(X_s)ds+ e^{-r \rho_{X\setminus \overline{S}}} \bar{v}_r(X_{\rho_S})\right]\quad \forall x\in \X\setminus \overline{S}.
\ee 
By \eqref{eq.lm.3} and \eqref{eq.lm.1'},
\be\label{eq.lemma} 
|g(x)|\leq \widetilde{M}(1+r)[\|\mu^n-\mu^\infty\|_{\infty}+\|\sigma^n-\sigma^\infty\|_{\infty}] \quad \forall x\in \X\setminus \overline{S}.
\ee 
Notice that $\bar{v}\mid _{\partial S}=0$ and $r\geq r_0>0$, then from \eqref{eq.lm} and \eqref{eq.lemma} we deduce that
\be\label{eq.r.big} 
\begin{aligned}
	|\bar{v}_r(x)|\leq \sup_{y\in \X\setminus \overline{S}}|g(y)|\int_0^\infty e^{-rt}dt
	\leq \widetilde{M}\left(1+\frac{1}{r_0}\right) [\|\mu^n-\mu^\infty\|_{\infty}+\|\sigma^n-\sigma^\infty\|_{\infty}]\quad \forall x\in \X\setminus \overline{S}.
\end{aligned}
\ee 

\textbf{Step 3.} Since the RHS of  \eqref{eq.r.big} is independent of $S$, we can now choose $N$ independent of $S$ such that the RHS of \eqref{eq.r.big} is less than $\frac{\eps}{2}$ for any $n>N$. This together with \eqref{eq.r.small} implies that
$$
\begin{aligned}
	&\sup_{x\in \X}\left|J^{Q^n}(x,S,f^\infty)-J^{Q^\infty}(x,S,f^\infty)\right|=\sup_{x\in \X\setminus \overline{S}}\left|J^{Q^n}(x,S,f^\infty)-J^{Q^\infty}(x,S,f^\infty)\right|\\
	=&\sup_{x\in \X\setminus \overline{S}} \left|\int_0^\infty v^n_r(x)F(dr)-\int_0^\infty v^\infty_r(x) F(dr) \right|\leq\int_{[0,{r_0}]} \|\bar{v}_r\|_{\infty} F(dr)+\int_{(r_0,{\infty})} \|\bar{v}_r\|_{\infty} F(dr)\leq\eps.
\end{aligned}
$$
where the first line follows from the fact that $J^{Q^n}(x,S,f^\infty)=f^\infty(x)=J^{Q^\infty}(x,S,f^\infty)$ for $x\in \overline{S}$.
\end{proof}

We are now ready for the proofs of Theorems \ref{thm.continue.eps} and  \ref{thm.semi.continue}.
\begin{proof}[{\bf Proof of Theorem \ref{thm.continue.eps}}]
	By Lemma \ref{lm.optimalmild}, for any $n\in \overline \N$,
$$
V^{Q^n}(x,f^n)=J^{Q^n}(x, S^*(f^n,Q^n), f^n)\quad \forall x\in \X.
$$
Now we divide the proof into two steps.

{\bf Step 1.} We first prove that, for any $\varepsilon>0$,
\be\label{eq.continue.upper}  
\limsup_{n\rightarrow \infty} V^{Q^n}_{\eps/2}(x,f^n) \leq V^{Q^\infty}_\eps(x,f^\infty)\quad \forall x\in \X.
\ee 
Fix an arbitrary $\eps>0$. By Lemma \ref{lm} and $\|f^n-f^\infty\|\to 0$, there exists $N$ such that for all $n\geq N$, 
$$
\sup_{x\in \X,S\in \Bc} |J^{Q^n}(x,S,f^n)-J^{Q^\infty}(x,S,f^\infty)|\leq \frac{\eps}{2}\quad \text{and}\quad f^\infty\leq f^n+\frac{\eps}{2}.
$$
Then for any $n\geq N$ and $S\in \Ec_{\eps/2}^{Q^n}(f^n)$, we have that 
$$
J^{Q^\infty}(x,S,f^\infty)\geq J^{Q^n}(x, S,f^n)-\frac{\eps}{2}\geq f^n(x)-\frac{\eps}{2}\geq  f^\infty(x)-\eps\quad \forall x\notin S,
$$
which implies that $S\in \Ec_{\eps/2}^{Q^\infty}(f^\infty)$. Hence,
$\Ec_{\eps/2}^{Q^n}(f^n)\subset \Ec_{\eps}^{Q^\infty}(f^\infty)$ for $n\geq N$.
Then for any $x\in \X$
$$
\begin{aligned}
	\limsup_{n\to\infty} V^{Q^n}_{\eps/2}(x,f^n)\leq  &\limsup_{n\to\infty} \sup_{S\in \Ec_\eps^{Q^\infty}(f^\infty)} J^{Q^n}(x,S,f^n)\\
	=& \sup_{S\in \Ec_\eps^{Q^\infty}(f^\infty)} J^{Q^\infty}(x,S,f^\infty)=V_\eps^{Q^\infty}(x,f^\infty),
\end{aligned}
$$
where the first equality follows from Lemma \ref{lm}, so \eqref {eq.continue.upper} is established.

{\bf Step 2.} Now we prove the desired result. By \eqref{eq.continue.upper} and Proposition \ref{prop0},
\be\label{eq.eps.sup} 
\begin{aligned}
	\lim\limits_{\varepsilon\searrow 0}\Big( \limsup_{n\rightarrow \infty}V^{Q^n}_\varepsilon(x,f^n)\Big)
	\leq  \limsup\limits_{\varepsilon\searrow 0} V^{Q^\infty}_{2\eps}(x,f^\infty)
	=V^{Q^\infty}(x,f^\infty)\quad \forall x\in \X.
\end{aligned}	
\ee 
In addition, for any $\eps>0$, by Lemma \ref{lm}, for $n$ large enough,
\be\label{eq.thm1.1} 
\begin{aligned}
	J^{Q^n}(x, S^*(f^\infty,Q^\infty), f^n)\geq &J^{Q^\infty}(x, S^*(f^\infty,Q^\infty), f^\infty)-\frac{\eps}{2}\\
	\geq &f^\infty(x)-\frac{\eps}{2}\geq f^n-\eps\quad \forall x\notin S^*(f^\infty,Q^\infty),
\end{aligned}
\ee
so $S^*(f^\infty,Q^\infty)\in \Ec_{\eps}^{Q^n}(f^n)$ for $n$ large enough. Therefore,
$$
V^{Q^\infty}(x,f^\infty)=J^{Q^\infty}(x, S^*(f^\infty,Q^\infty), f^\infty)\leq\liminf_{n\rightarrow \infty} V^{Q^n}_\varepsilon(x,f^n)\quad \forall x\in \X, 
$$
which implies that
\be\label{eq.eps.inf} 
V^{Q^\infty}(x,f^\infty)\leq \lim\limits_{\varepsilon\searrow 0}\Big( \liminf_{n\rightarrow \infty} V^{Q^n}_\varepsilon(x,f^n)\Big)\quad \forall x\in \X.
\ee 
Then the desired result follows from \eqref{eq.eps.sup} and \eqref{eq.eps.inf}.	
\end{proof}

\begin{proof}[{\bf Proof of Theorem \ref{thm.semi.continue}}]
	Take an arbitrary $\eps>0$. Lemma \ref{lm} enables us to exchange $n$ and $\infty$ in \eqref{eq.thm1.1} to conclude that
$
S^*(f^n,Q^n)\in \Ec_\eps(f^\infty, Q^\infty),
$ for $n$ large enough.
This together with Lemma \ref{lm.optimalmild} implies that
$$
V^{Q^n}(x,f^n)=J^{Q^n}(x, S^*(f^n,Q^n), f^n)\leq \sup_{S\in \Ec_\eps(f^\infty, Q^\infty)} J^{Q^n}(x, S, f^n), \quad  \text{for $n$ large enough}.
$$
By applying Lemma \ref{lm} again to above inequality, we have that for $x\in \X$,
\begin{align*}
	\limsup_{n\to\infty}V^{Q^n}(x,f^n)\leq &\lim_{n\to\infty}\sup_{S\in \Ec_\eps(f^\infty, Q^\infty)} J^{Q^n}(x, S, f^n)\\
	=& \sup_{S\in \Ec_\eps(f^\infty, Q^\infty)} J^{Q^\infty}(x, S, f^\infty)=V^{Q^\infty}_\eps(x,f^\infty).
\end{align*}
Then by letting $\eps\searrow 0$ and applying Proposition \ref{prop0}, we achieve \eqref{e2}.
\end{proof}

\section{Examples}\label{sec:example}
In Section \ref{eg1}, we provide an example where $\limsup_{n\to\infty} V^{Q^n}(x,f^n)< V^{Q^\infty}(x,f^\infty)$.
This indicates that the exact continuity for $(f^n,Q^n)\mapsto V(f^n,Q^n)$ may fail, which further justifies the necessity of use for $\eps$-mild equilibria for the value function.  In Section \ref{eg2}, we present an example where $S^*(f^\infty, Q^\infty)\nsubseteq\liminf_{n\to\infty} S^*(f^n, Q^n)$. In particular, this contrasts with \cite[Theorem 3.1]{bayraktar2022stability}, the lower semi-continuity for $(f^n,Q^n)\mapsto S^*(f^n,Q^n)$ in the discrete-time context. Throughout this section, {\color{black}$g'(x-)$ (resp. $g'(x+)$) denotes the left (resp. right) derivative.}

\subsection{An example of strict upper semi-continuity}\label{eg1}
Let $\X=\R$, $\delta(t)=\frac{1}{1+\beta t}$ with $\beta>0$, and set
$ \mu^n\equiv -\frac{1}{n}, \ \sigma^n\equiv 1$ for all $n\in\overline\N$. We have that
\be\label{eq.eg.delta} 
\delta(t+s)> \delta(t)\delta(s)\quad \forall t,s>0.
\ee 
We choose arbitrary constants $a,b,d\in\R$ with $a<b$ and $d>0$, and define 
$$
\begin{cases}
	J_b(x):=d\E^{Q^\infty}_x[\delta(\rho_{\{b\}})]\quad \forall x\in \R;\quad  c:= J_b(a)>0;\\
	J_{a,b}(x):=c\E_x^{Q^\infty}[\delta(\rho_{\{a,b\}})\cdot 1_{\{\rho_{\{a,b\}=a}\}}]+d\E^x[\delta(\rho_{\{a,b\}})\cdot 1_{\{\rho_{\{a,b\}=b}\}}\quad \forall x\in \R.
\end{cases}
$$ 
Define reward functions $f^n\equiv f$ for all $n\in \overline \N$ with
\be\label{eq.eg.f} 
	f(x) :=
\begin{cases}
	\begin{aligned}
&e^{-2(a-x)}\cdot J_{a,b}(x)\quad &  x\in (-\infty, a)\\
&\frac{1}{1+L_0(x-a)(b-x)} \cdot  J_{a,b}(x) & \quad x\in[a,b]\\
&J_{b}(x) & \quad x\in (b,\infty)
	\end{aligned}
\end{cases},
\ee 
where $L_0>0$ is a constant. We first provide formulas of $J_b, J_{a,b}$ and relations of $J_b, J_{a,b}$ and $f$ as follows.

\begin{Lemma}\label{lm.eg}
We have that
\begin{align}
	\notag J_b(x)
 &=  d\int_0^\infty e^{-s}e^{-|x-b|\sqrt{2\beta s}}ds\quad\forall x\in\R;\\
\label{eq.eg.jab}  	J_{a,b}(x)&=	\begin{cases}	
	c\int_0^\infty e^{-s}e^{-|x-a|\sqrt{2\beta s}}ds,&x\in (-\infty, a)\\
	c\int_0^\infty e^{-s}\frac{\sinh((b-x)\sqrt{2\beta s})}{\sinh((b-a)\sqrt{2\beta s})}ds+  d\int_0^\infty e^{-s}\frac{\sinh((x-a)\sqrt{2\beta s})}{\sinh((b-a)\sqrt{2\beta s})}ds &x\in [a,b],\\
	J_b(x) &x\in (b,\infty).
\end{cases}
\end{align}
Moreover, 
\be\label{eq.eg.fjb} 
\begin{cases}
	\begin{aligned}
		& J_b(x)=J_{a,b}(x)=f(x) & x\in \{a\}\cup [b,\infty);\\
		&J_b(x)>J_{a,b}(x)>f(x) & x\in (-\infty,a)\cup (a,b),
	\end{aligned}
\end{cases}
\ee
as shown in Figure \ref{figure}.
\end{Lemma}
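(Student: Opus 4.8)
The plan is to reduce every quantity to classical Laplace-transform formulas for Brownian hitting times via the weighted representation of $\delta$, and then to obtain the two strict inequalities in \eqref{eq.eg.fjb} by two quite different mechanisms: a covariance (comonotonicity) argument that isolates exactly the strict decreasing-impatience property \eqref{eq.eg.delta} for $J_b>J_{a,b}$, and a trivial prefactor estimate for $J_{a,b}>f$.

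First I would record the representation $\delta(t)=\frac{1}{1+\beta t}=\int_0^\infty e^{-s}e^{-\beta t s}\,ds$, and note that under $Q^\infty$ (where $\mu^\infty\equiv 0$, $\sigma^\infty\equiv 1$) the process $X$ is a standard Brownian motion. By Tonelli, $J_b(x)=d\int_0^\infty e^{-s}\,\E^{Q^\infty}_x[e^{-\beta s\,\rho_{\{b\}}}]\,ds$, and the one-sided transform $\E^{Q^\infty}_x[e^{-\lambda\rho_{\{b\}}}]=e^{-|x-b|\sqrt{2\lambda}}$ (with $\lambda=\beta s$) gives the first display. For $J_{a,b}$, writing $\rho:=\rho_{\{a,b\}}$ and splitting on which endpoint is hit first, I would insert the two-sided exit transforms $\E^{Q^\infty}_x[e^{-\lambda\rho}\mathbf 1_{\{X_\rho=a\}}]=\sinh((b-x)\sqrt{2\lambda})/\sinh((b-a)\sqrt{2\lambda})$ and its mirror image for $\{X_\rho=b\}$, valid for $x\in[a,b]$, yielding the middle case of \eqref{eq.eg.jab}. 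For $x<a$ (resp.\ $x>b$) path-continuity forces $\rho_{\{a,b\}}=\rho_{\{a\}}$ with $X_\rho=a$ (resp.\ $\rho_{\{a,b\}}=\rho_{\{b\}}$ with $X_\rho=b$), collapsing $J_{a,b}$ to $c\int_0^\infty e^{-s}e^{-(a-x)\sqrt{2\beta s}}\,ds$ (resp.\ to $J_b$), which are the remaining two cases. The equalities in \eqref{eq.eg.fjb} on $\{a\}\cup[b,\infty)$ then follow by direct evaluation: at $x=a$ the sinh-ratios equal $1$ and $0$, so $J_{a,b}(a)=c=J_b(a)$; at $x=b$ they equal $0$ and $1$, so $J_{a,b}(b)=d=J_b(b)$; on $(b,\infty)$ the reduction already gives $J_{a,b}=J_b$; and since the prefactors $e^{-2(a-x)}$ and $1/(1+L_0(x-a)(b-x))$ are both $1$ at $x\in\{a,b\}$, while $f\equiv J_b$ on $(b,\infty)$ by definition, $f$ agrees with both there.

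The crux is the strict inequality $J_b>J_{a,b}$ on $(-\infty,a)\cup(a,b)$. Set $g_\lambda(x):=e^{-|x-b|\sqrt{2\lambda}}$, let $h^a_\lambda,h^b_\lambda$ be the two-sided weights above, and $k_\lambda(x):=e^{-(a-x)\sqrt{2\lambda}}$. The strong Markov property at $\rho_{\{a,b\}}$ (on $\{X_\rho=a\}$ the path must still travel from $a$ to $b$) yields the decomposition $g_\lambda(x)=h^b_\lambda(x)+h^a_\lambda(x)g_\lambda(a)$ for $x\in[a,b]$, and $g_\lambda(x)=k_\lambda(x)g_\lambda(a)$ for $x<a$. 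Substituting $\lambda=\beta s$, integrating against $e^{-s}\,ds$, and using $c=J_b(a)=d\int_0^\infty e^{-s}g_{\beta s}(a)\,ds$, the $h^b$-terms cancel and, since $\int_0^\infty e^{-s}\,ds=1$, I obtain
$$
J_b(x)-J_{a,b}(x)=d\Big(\E[\varphi_x(s)g_{\beta s}(a)]-\E[\varphi_x(s)]\,\E[g_{\beta s}(a)]\Big)=d\,\mathrm{Cov}\big(\varphi_x(s),\,g_{\beta s}(a)\big),
$$
where $\varphi_x=h^a$ on $(a,b)$ and $\varphi_x=k$ on $(-\infty,a)$, and the covariance is taken with respect to the probability density $e^{-s}$ on $(0,\infty)$. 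For each fixed $x$ in these regions, $s\mapsto g_{\beta s}(a)$ and $s\mapsto k_{\beta s}(x)$ are manifestly strictly decreasing, and a short computation using that $t\mapsto t\coth t$ is increasing on $(0,\infty)$ shows $s\mapsto h^a_{\beta s}(x)$ is strictly decreasing as well. Hence $\varphi_x$ and $g_{\beta\,\cdot}(a)$ are comonotone nonconstant functions of $s$ under a nondegenerate law, and the strict Chebyshev integral inequality forces $\mathrm{Cov}>0$, i.e.\ $J_b(x)>J_{a,b}(x)$. This step is precisely where \eqref{eq.eg.delta} is used: if $\delta$ were a single exponential (i.e.\ $F$ a point mass), $g_{\beta s}(a)$ would be constant on the support, the covariance would vanish, and only equality would hold; the genuine spread of $F$ is what produces the strict gap. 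I expect verifying the strict monotonicity of $h^a_{\beta s}(x)$ in $s$ and invoking the strict version of Chebyshev's inequality to be the only non-routine part.

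Finally, $J_{a,b}>f$ is immediate from the definition of $f$ together with $J_{a,b}>0$ (positive coefficients $c,d$ and positive sinh-ratios/exponentials). On $(-\infty,a)$ one has $f=e^{-2(a-x)}J_{a,b}$ with $e^{-2(a-x)}<1$ since $a-x>0$; on $(a,b)$ one has $f=\frac{1}{1+L_0(x-a)(b-x)}J_{a,b}$ with $L_0(x-a)(b-x)>0$, so the prefactor is $<1$; in both cases the strict inequality $f<J_{a,b}$ follows.
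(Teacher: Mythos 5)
Your proposal is correct, and the derivation of the explicit formulas (Laplace-transform representation of $\delta(t)=\frac{1}{1+\beta t}=\int_0^\infty e^{-s}e^{-\beta ts}\,ds$, Tonelli, and the classical one- and two-sided hitting-time transforms) coincides with the paper's. Where you genuinely diverge is the crux inequality $J_b>J_{a,b}$ on $(-\infty,a)\cup(a,b)$. The paper argues pathwise: it writes $J_{a,b}(x)-J_b(x)=\mathbb{E}^{Q^\infty}_x\bigl[\mathbf{1}_{\{X_{\rho_{\{a,b\}}}=a\}}\bigl(c\,\delta(\rho_{\{a,b\}})-d\,\delta(\rho_{\{b\}})\bigr)\bigr]$, applies the strict decreasing-impatience inequality $\delta(t+s)>\delta(t)\delta(s)$ to the term $\delta(\rho_{\{b\}})=\delta(\rho_{\{a,b\}}+(\rho_{\{b\}}-\rho_{\{a,b\}}))$, and then conditions on $\mathcal{F}_{\rho_{\{a,b\}}}$ so that the strong Markov property produces $c-J_b(a)=0$. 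You instead transfer the strong Markov decomposition to the level of resolvents ($g_\lambda(x)=h^b_\lambda(x)+h^a_\lambda(x)g_\lambda(a)$ on $[a,b]$, $g_\lambda=k_\lambda\, g_\lambda(a)$ on $(-\infty,a)$), cancel the $h^b$-terms using $c=d\,\mathbb{E}[g_{\beta s}(a)]$, and identify the gap as $d\,\mathrm{Cov}(\varphi_x(s),g_{\beta s}(a))$ under the law $e^{-s}ds$, which is strictly positive by the strict Chebyshev inequality for comonotone nonconstant functions (your monotonicity check of the sinh-ratio via $t\mapsto t\coth t$ increasing is the right verification). The two mechanisms are morally the same — the strict inequality $\delta(t+s)>\delta(t)\delta(s)$ for a non-degenerate mixture of exponentials \emph{is} a covariance inequality — but the trade-off is real: the paper's argument needs no explicit formulas and would survive for any diffusion and any discount function with strict decreasing impatience, whereas yours is tied to the Brownian hyperbolic-function transforms but makes completely transparent why a point-mass $F$ (exponential discounting) would collapse the gap to zero. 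The equalities on $\{a\}\cup[b,\infty)$ and the inequality $J_{a,b}>f$ via the prefactors $e^{-2(a-x)}<1$ and $\bigl(1+L_0(x-a)(b-x)\bigr)^{-1}<1$ are handled the same way in both proofs.
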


\begin{proof}
By the definition of $J_b$, a direct calculation shows
\be\label{eq.eg.jblm}  
\begin{aligned}
J_b(x)&=d\E^{Q^\infty}_x[\delta(\rho_{\{b\}})]=d\int_0^\infty \frac{p_b(t)}{1+\beta t} dt=d\int_0^\infty \int_0^\infty e^{-(1+\beta t)s}p_b(t)dsdt\\
 &=d\int_0^\infty e^s \left( \int_0^\infty  e^{-\beta ts}p_b(t)dt\right)ds=d\int_0^\infty e^{-s}\E^{Q^\infty}_x[e^{-\beta s \rho_{\{b\}}}]ds\\
&=  d\int_0^\infty e^{-s}e^{-|x-b|\sqrt{2\beta s}}ds\quad\forall x\in\R,
\end{aligned}
\ee 
where $p_b(t)$ denotes the density function of $\rho_{\{b\}}$ under $\P^{Q^\infty}_x$, and the third line follows from the formula \cite[2.0.1 on page 204]{MR1912205}. Similarly,
\be\label{eq.eg.jablm} 
\begin{aligned}
	J_{a,b}(x)&:=c\E^{Q^\infty}_x[\delta(\rho_{\{a,b\}})\cdot 1_{\{\rho_{\{a,b\}=a}\}}]+d\E^{Q^\infty}_x[\delta(\rho_{\{a,b\}})\cdot 1_{\{\rho_{\{a,b\}=b}\}}]\\
& = c\int_0^\infty \int_0^\infty e^{-(1+\beta t)s}p_{a,b}(t)dsdt+d\int_0^\infty \int_0^\infty e^{-(1+\beta t)s}q_{a,b}(t)dsdt\\
 &=c\int_0^\infty e^s \left( \int_0^\infty  e^{-\beta ts}p_{a,b}(t)dt\right)ds+d\int_0^\infty e^s \left( \int_0^\infty  e^{-\beta ts}q_{a,b}(t)dt\right)ds\\
	& = c\int_0^\infty e^{-s}\E^x[e^{-\beta s \rho_{\{a,b\}}}\cdot 1_{\{X_{\rho_{\{a,b\}}=a}\}}]ds +d\int_0^\infty e^{-s}\E^x[e^{-\beta s \rho_{\{a,b\}}}\cdot 1_{\{X_{\rho_{\{a,b\}}=b}\}}]ds\\
 &=	\begin{cases}	
		c\int_0^\infty e^{-s}e^{-|x-a|\sqrt{2\beta s}}ds,&x\in (-\infty, a)\\
		c\int_0^\infty e^{-s}\frac{\sinh((b-x)\sqrt{2\beta s})}{\sinh((b-a)\sqrt{2\beta s})}ds+  d\int_0^\infty e^{-s}\frac{\sinh((x-a)\sqrt{2\beta s})}{\sinh((b-a)\sqrt{2\beta s})}ds &x\in [a,b],\\
		J_b(x) &x\in (b,\infty).
	\end{cases}
\end{aligned}
\ee 
where $p_{a,b}(t)$ (resp. $q_{a,b}(t)$) denotes the density of $\rho_{\{a,b\}}$ on $\{X_{\rho_{\{a,b\}}}=a\}$ (resp. $\{X_{\rho_{\{a,b\}}}=b\}$) under $\P^{Q^\infty}_x$, and the last line above follows from formulas \cite[3.0.5 (a)\&(b) on page 218]{MR1912205}. 
%

{\color{black}We have that for $x\in (a,b)$,
\begin{equation}\label{eq.eg.prepar}
\begin{aligned}
J_{a,b}(x)-J_{b}(x)&=\E^{Q^\infty}_x\left[1_{\{X_{\rho_{\{a,b\}}=a}\}}\left(c\delta(\rho_{\{a,b\}}-d\delta(\rho_{\{b\}})\right)\right]\\
&<\E^{Q^\infty}_x\left[1_{\{X_{\rho_{\{a,b\}}=a}\}}\delta(\rho_{\{a,b\}})\left(c-d\delta(\rho_{\{b\}}-\rho_{\{a,b\}})\right)\right]\\
&=\E^{Q^\infty}_x\left[1_{\{X_{\rho_{\{a,b\}}=a}\}}\delta(\rho_{\{a,b\}})\left(c-d\E^{Q^\infty}_x\left[\delta(\rho_{\{b\}}-\rho_{\{a,b\}})\Big|\mathcal{F}_{\rho_{\{a,b\}}}\right]\right)\right]\\
&=\E^{Q^\infty}_x\left[1_{\{X_{\rho_{\{a,b\}}=a}\}}\delta(\rho_{\{a,b\}})\left(c-J_b(a)\right)\right]=0,\\
\end{aligned}
\end{equation}}
where the second line follows from \eqref{eq.eg.delta} and $f(b)=d>0$. Similarly, we have
\be\label{eq.eg.prepar'}  
J_{a,b}(x)< J_b(x)\quad \forall x\in(-\infty, a).
\ee 
Then, by combining \eqref{eq.eg.prepar}, \eqref{eq.eg.prepar'} and \eqref{eq.eg.f}, we reach to \eqref{eq.eg.fjb}.
\end{proof}

Notice that all the conditions in Theorem \ref{thm.continue.eps} are satisfied. The following proposition shows that the strict semi-continuity in \eqref{e2} holds in this example.

\begin{figure}[h!]
	\centering
	\includegraphics[width=15cm]{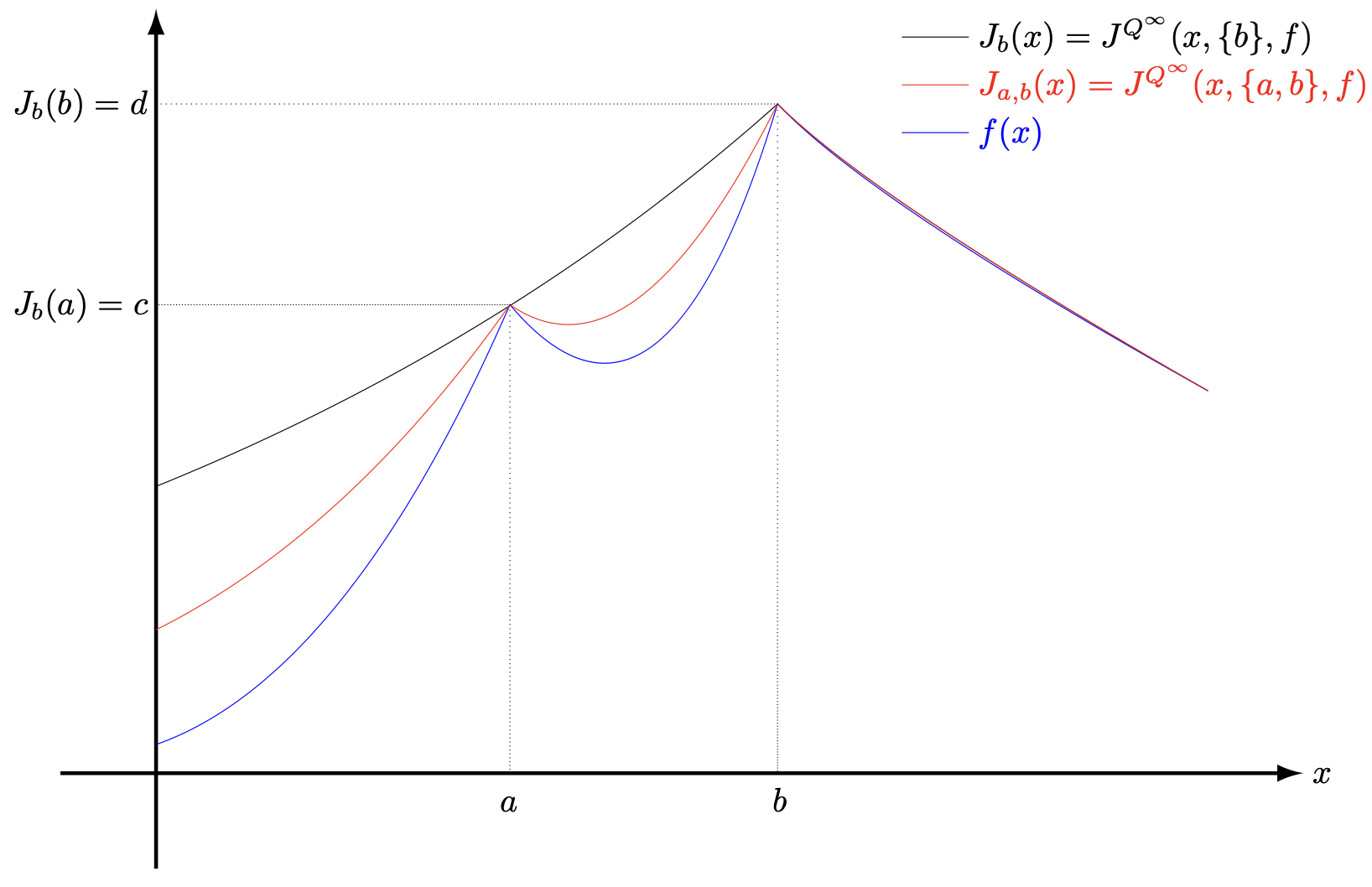}
	\caption{Relations of $J_b(\cdot), J_{a,b}(\cdot)$ and $f(\cdot)$}\label{figure}
	\centering
\end{figure}

\begin{Proposition}\label{prop.eg1}
We have that $S^*(Q^\infty, f)=\{b\}$, and $S^*(f, Q^n)=\{a,b\}$ for $n\in \N$ large enough. 
	Moreover,
	\be\label{eq.eg.} 
	\limsup_{n\to\infty} V^{Q^n}(x, f)<V^{Q^\infty}(x, f)\quad \forall x\in(-\infty, a).
	\ee 
\end{Proposition}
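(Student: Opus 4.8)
The plan is to pin down the smallest mild equilibria explicitly in each case and then read off the strict gap. \emph{For $Q^\infty=(0,1)$}, I would first note that hitting $\{b\}$ always yields the reward $f(b)=d$, so $J^{Q^\infty}(x,\{b\},f)=d\,\E^{Q^\infty}_x[\delta(\rho_{\{b\}})]=J_b(x)$, and \eqref{eq.eg.fjb} of Lemma~\ref{lm.eg} gives $f\le J_b$ on $\X\setminus\{b\}$; hence $\{b\}\in\Ec^{Q^\infty}(f)$. Since $\rho_\emptyset=\infty$ and $\delta(\infty)=0$, the empty set violates the equilibrium inequality at $b$ (where $f(b)=d>0$), so $\emptyset\notin\Ec^{Q^\infty}(f)$. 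Because $\{b\}\in\Ec^{Q^\infty}(f)$ forces $S^*(f,Q^\infty)\subseteq\{b\}$, while $S^*(f,Q^\infty)$ is itself a nonempty equilibrium by Lemma~\ref{lm.optimalmild}, I conclude $S^*(f,Q^\infty)=\{b\}$.

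\emph{For $Q^n=(-1/n,1)$, $n$ finite}, I claim $S^*(f,Q^n)=\{a,b\}$. Once $\{a,b\}\in\Ec^{Q^n}(f)$ is established (see below), this again forces $S^*(f,Q^n)\subseteq\{a,b\}$, so it suffices to rule out the three proper closed subsets. The set $\emptyset$ fails as before; $\{a\}$ fails at $b$ since $f(b)=d>c>c\,\E^{Q^n}_b[\delta(\rho_{\{a\}})]=J^{Q^n}(b,\{a\},f)$ (using $c=J_b(a)<d$); and $\{b\}$ fails at $a$ because the negative drift slows the upward passage $a\to b$, so $J^{Q^n}(a,\{b\},f)=d\,\E^{Q^n}_a[\delta(\rho_{\{b\}})]<d\,\E^{Q^\infty}_a[\delta(\rho_{\{b\}})]=c=f(a)$. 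This last strict inequality I would extract from the explicit Laplace transform $\E^{Q^n}_a[e^{-\lambda\rho_{\{b\}}}]=e^{-(b-a)(1/n+\sqrt{1/n^2+2\lambda})}$, together with the weighted representation $\delta(t)=\int_0^\infty e^{-rt}F(dr)$. To verify $\{a,b\}\in\Ec^{Q^n}(f)$ off $(a,b)$: on $(-\infty,a)$ the inequality reads $e^{-2(a-x)}J_{a,b}(x)\le J^{Q^n}(x,\{a\},f)$, which, written through $\E_x[e^{-\lambda\rho_{\{a\}}}]$ and the representation of $\delta$, reduces to the pointwise exponent bound $2+\sqrt{2\beta s}\ge 1/n+\sqrt{1/n^2+2\beta s}$, valid for every $n\ge1$ by subadditivity of the square root — this is precisely what the damping factor $e^{-2(a-x)}$ is engineered to guarantee; on $(b,\infty)$ the inequality $J_b(x)\le J^{Q^n}(x,\{b\},f)$ holds for all $n\ge1$ because the downward passage $x\to b$ is faster under negative drift (the same subadditivity gives $1/n-\sqrt{1/n^2+2\lambda}\ge-\sqrt{2\lambda}$).

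The main obstacle is the interior $(a,b)$, where the gap $J_{a,b}-f=J_{a,b}\cdot\frac{L_0(x-a)(b-x)}{1+L_0(x-a)(b-x)}$ is strictly positive but vanishes to first order at $a$ and $b$, so the uniform convergence $J^{Q^n}(\cdot,\{a,b\},f)\to J_{a,b}$ from Lemma~\ref{lm} is not by itself enough near the endpoints. I would split $(a,b)$ into a central compact interval $[a+\eta,b-\eta]$ and two endpoint zones. On the central piece the gap is bounded below by a positive constant, so Lemma~\ref{lm} yields $J^{Q^n}(\cdot,\{a,b\},f)\ge f$ for large $n$. Near the endpoints I would argue via one-sided derivatives: setting $\psi^n:=J^{Q^n}(\cdot,\{a,b\},f)-f$ on $[a,b]$, one has $\psi^n(a)=0$ and, using convergence of the (explicitly computable) derivatives of $J^{Q^n}(\cdot,\{a,b\},f)$ to those of $J_{a,b}$, $(\psi^n)'(a+)\to c\,L_0(b-a)>0$. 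Combined with a uniform-in-$n$ bound on $(\psi^n)''$ — obtained by integrating the elliptic estimate $|(v^n_r)''|\le\widetilde M(1+r)$ from Lemma~\ref{lm} against $F(dr)$, whose first moment $\int r\,F(dr)=\beta$ is finite — this forces $(\psi^n)'>0$, hence $\psi^n>0$, on a fixed right-neighborhood $[a,a+\eta]$ for $n$ large; the symmetric computation $(\psi^n)'(b-)\to-d\,L_0(b-a)<0$ handles $[b-\eta,b]$. This endpoint analysis is the delicate step.

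Finally, the strict inequality \eqref{eq.eg.} follows by direct evaluation. For $x<a$ we have $V^{Q^\infty}(x,f)=J^{Q^\infty}(x,\{b\},f)=J_b(x)$, whereas $V^{Q^n}(x,f)=J^{Q^n}(x,\{a,b\},f)=c\,\E^{Q^n}_x[\delta(\rho_{\{a\}})]$, since from $x<a$ the set $\{a,b\}$ is reached at $a$. Letting $n\to\infty$ gives $c\,\E^{Q^n}_x[\delta(\rho_{\{a\}})]\to c\,\E^{Q^\infty}_x[\delta(\rho_{\{a\}})]=J_{a,b}(x)$ by dominated convergence (or Lemma~\ref{lm}), and \eqref{eq.eg.prepar'} yields $J_{a,b}(x)<J_b(x)$. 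Therefore $\limsup_{n\to\infty}V^{Q^n}(x,f)=J_{a,b}(x)<J_b(x)=V^{Q^\infty}(x,f)$ for all $x\in(-\infty,a)$.
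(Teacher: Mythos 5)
Your proposal is correct and follows essentially the same route as the paper's proof: explicit Laplace-transform formulas, the subadditivity $\sqrt{2\beta s+1/n^2}\le\sqrt{2\beta s}+1/n$ to handle $(-\infty,a)$ and $(b,\infty)$, a first-derivative comparison at $a+$ and $b-$ combined with uniform convergence on a central compact piece to handle $(a,b)$, exclusion of $\{b\}$ as an equilibrium for $Q^n$ via the strict effect of the drift $-1/n$ at $a$, and finally the gap $J_b-J_{a,b}>0$ from \eqref{eq.eg.prepar'}. The only (immaterial) difference is at the interior endpoints, where the paper propagates the sign of $(J^n_{a,b})'-f'$ using uniform convergence of the first derivatives on fixed one-sided neighborhoods of $a$ and $b$, whereas you use convergence of the one-sided derivatives at the endpoints themselves together with a uniform-in-$n$ second-derivative bound; both close the argument.
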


\begin{proof}

	By construction of $f$ in \eqref{eq.eg.f}, we can see that $f>0$ and $f(b)=\max_{x\in \R} f(x)$,  thus $b$ must be contained in $S^*(f, Q^n)$ for any $n\in\overline\N$.
	Since $f(x)\leq J_b(x)=J^{Q^\infty}(x, f, \{b\})$, we have $S^*(f, Q^\infty)=\{b\}$. 
	
	Now we show that for $n\in\N$ large enough, $\{a,b\}$ is a mild equilibrium for $Q^n$. For all $n\in \N$, write $J^n_b(x):=J^{Q^n}(x, \{b\}, f)$ and $J_{a,b}^n(x):=J^{Q^n}(x,\{a,b\},f)$ for short. First, by applying similar arguments in \eqref{eq.eg.jblm}, \eqref{eq.eg.jablm} with formulas \cite[2.0.1 on page 301, 3.0.5 (a)\&(b) on page 315]{MR1912205}, 
	we have that, for any $n\in \N$,
	\be\label{eq.eg.jn} 
	\begin{aligned}
	&J^n_b(x)=d\int_0^\infty e^{-s}e^{-\frac{1}{n}(b-x)- |b-x|\sqrt{2\beta s+\frac{1}{n^2}}}ds\quad \forall x\in \R ;\\
	&J^n_{a,b}(x)=
	\begin{cases}
	c\int_0^\infty e^{-s}e^{-\frac{1}{n}(a-x)- |a-x|\sqrt{2\beta s+\frac{1}{n^2}}}ds  & x\in(-\infty,a)\\
		\int_0^\infty  \left(c e^{\frac{1}{n}(x-a)}e^{-s}\frac{\sinh((b-x)\sqrt{2\beta s+\frac{1}{n^2}})}{\sinh((b-a)\sqrt{2\beta s +\frac{1}{n^2}})}+ d  e^{\frac{1}{n}(x-b)}e^{-s}\frac{\sinh((x-a)\sqrt{2\beta s +\frac{1}{n^2}})}{\sinh((b-a)\sqrt{2\beta s +\frac{1}{n^2}})}\right)ds,&x\in [a,b]\\
	J^{Q^n}(x,\{b\}, f) &x\in(b,\infty)
	\end{cases}.
	\end{aligned} \quad
	\ee
	By the formulas of $J^n_b$ in \eqref{eq.eg.jn} and the relation $\sqrt{2\beta s+\frac{1}{n^2}}<\sqrt{2\beta s}+\frac{1}{n}$, we have that for any $n\in \N$,
	\be\label{eq.eg1.0}  
	\begin{cases}
		J_b(x)\cdot e^{-\frac{2}{n}|b-x|}< J^n_b(x)<J_b(x)\quad \forall x\in(-\infty, b);\\
		J_b(x)< J^{n}_b(x)<J_b(x)\cdot e^{\frac{1}{n}|b-x|}\quad \forall x\in(b,\infty).
	\end{cases}
	\ee 
	Then the first line in \eqref{eq.eg.fjb} together with the second line in \eqref{eq.eg1.0} indicates that, for all $n\in \N$, 
	\be\label{eq.eg.x>b} 
	J^n_b(x)\geq f(x)\quad \forall x\in [b,\infty).
	\ee 
Similarly, we can show that for any $n\in\N$,
	\begin{equation}\label{e7}
		J^n_{a,b}(x)\geq J_{a,b}(x)\cdot e^{-\frac{2}{n}(a-x)}\geq f(x),\quad \forall x\in (-\infty, a).
	\end{equation}
	As for $x\in(a,b)$, by the formulas of $J^n_{a,b}$ on $[a,b]$ in \eqref{eq.eg.jn} and $J_{a,b}$ on $[a,b]$ in \eqref{eq.eg.jab}, we have that
	$$J_{a,b}^{n}\to J_{a,b}\quad\text{and}\quad (J_{a,b}^{n})'\to J_{a,b}',\quad\text{uniformly on }(a,b).$$
	By the formula of $f$ on $[a,b]$ in \eqref{eq.eg.f}, we have $f'(a+)<J_{a,b}'(a+)$ and $f'(b-)>J_{a,b}'(b-)$. Then there exist $\alpha>0$ and $a',b'\in(a,b)$ with $a'<b'$, such that for any $x\in[a',b']$,
	$$f'(x)-J_{a,b}'(x)\leq-\alpha\ \ \text{for }x\in(a,a'); \ \ f'(x)-J_{a,b}'(x)\geq \alpha\ \text{for } x\in(b',b); \;\text{and } f(x)-J_{a,b}(x)\leq -\alpha.$$
	Then for $n\in\N$ large enough, we have that
	$$f'(x)\leq (J_{a,b}^n)'(x)\; \text{for } x\in(a,a');\ \ f'(x)\geq (J_{a,b}^n)'(x)\ \text{for }  x\in(b',b);\ \ f(x)\leq J_{a,b}^n(x)\ \text{for } x\in[a',b'],$$
	which implies that
	\begin{equation}\label{e8}
		J_{a,b}^n(x)\geq f(x)\quad \forall x\in(a,b),\quad \text{for $n\in \N$ large enough}. 
	\end{equation}
Then by \eqref{eq.eg.x>b}, \eqref{e7} and \eqref{e8}, $\{a,b\}$ is a mild equilibrium for $Q^n$ when $n\in\N$ is large enough.
	
		Note that $S^*(f, Q^n)\neq \{b\}$ for $n\in\N$. Indeed, if for some  $n_0\in \N$, $\{b\}$ is a mild equilibrium for $Q^{n_0}$, then due to the setup of $(\mu^n)_{n\in \overline \N}$, 
		$$f(a)\leq J^{Q^{n_0}}(a,\{b\},f)<J^{Q^\infty}(a,\{b\},f)=f(a),$$
		a contradiction. Then by the facts that $b\in S^*(f,Q^n)$ for all $n\in \overline \N$ and $\{a,b\}$ is a mild equilibrium for all $n\in \N$ large enough, we conclude that $S^*(f, Q^n)= \{a, b\}$ for $n\in \N$ large enough.


	Finally, since $S^*(f, Q^n)=\{a,b\}$ for all $n\in \N$ and $S^*(f, Q^\infty)=\{b\}$, we have that for $x<a$, 
	$$
	\begin{aligned}
		&V^{Q^\infty}(x, f)-\limsup_{n\to\infty} V^{Q^n}(x, f)=J_b(x)-\limsup_{n\to\infty}J^{Q^n}(x, \{a,b\},f)\\
		&=J_b(x)-\limsup_{n\to\infty}J^{Q^n}(x, \{a\},f)= J_b(x)-J^{Q^\infty}(x, \{a\},f)=J_b(x)-J_{a,b}(x)>0,
	\end{aligned}
	$$
	which implies \eqref{eq.eg.}.
\end{proof}

\subsection{An example showing $S^*(f^\infty, Q^\infty)\nsubseteq \liminf_{n\to\infty} S^*(f^n,Q^n)$}\label{eg2}

Let $\mu^n\equiv0$ and $\sigma^n\equiv 1$ for all $n\in \overline \N$. That is, we fix the process $X$ to be a one-dimensional Brownian motion and take $\X=\R$. Let $\delta(t)=\frac{1}{1+\beta t}$ with $\beta>0$, and set $\alpha:=1/\int_0^\infty e^{-s} \sqrt{2\beta s} ds$. We further define
$$
f^\infty(x):= 
\begin{cases}
	\begin{aligned}
		&x+\alpha, \quad & x\in[-\alpha,0),\\
		&-x+\alpha, \quad & x\in [0, \alpha),\\
		&0, \quad & \text{otherwise},
	\end{aligned}
\end{cases}\quad\text{and}\quad
f^n(x):= f^\infty \left(x-\frac{1}{n}\right)\; \forall\, n\in\N.
$$
Notice that the conditions in Theorem \ref{thm.continue.eps} are satisfied.
\begin{Proposition}
	\be\label{eq.eg2} 
	\limsup_{n\to\infty} S^*(f^n,Q^n)= \emptyset\subsetneqq \{0\}=S^*(f^\infty, Q^\infty).
	\ee
\end{Proposition}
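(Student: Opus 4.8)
The plan is to identify $S^*$ explicitly on both sides and then read off the set-theoretic $\limsup$. I would organize everything around two facts that hold for any tent-shaped reward whose apex carries the global maximum: (a) the apex lies in every mild equilibrium, and (b) the singleton consisting of the apex is itself a mild equilibrium. Combined with the description of $S^*$ as the smallest mild equilibrium (Lemma \ref{lm.optimalmild}, whose hypotheses are met since each $f^n,f^\infty$ is non-negative, continuous and bounded by $\alpha$, and $Q^n=Q^\infty$ is Brownian motion), these pin down $S^*(f^\infty,Q^\infty)=\{0\}$ and $S^*(f^n,Q^n)=\{1/n\}$, after which \eqref{eq.eg2} is a direct computation.

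For (a), let $x^\star$ be the unique maximizer of the reward ($x^\star=0$ for $f^\infty$, $x^\star=1/n$ for $f^n$), with value $\alpha>0$. If $x^\star\notin S$ for a closed equilibrium $S$, then $\rho_S>0$ $\P^Q_{x^\star}$-a.s. because $x^\star$ sits in the open set $\X\setminus S$; since $\delta(0)=1$, $\delta$ is strictly decreasing and $\delta(\infty)=0$, we get $\delta(\rho_S)<1$ a.s., whence
$$
J^Q(x^\star,S,f)=\E^Q_{x^\star}\!\big[\delta(\rho_S)f(X_{\rho_S})\big]\le \alpha\,\E^Q_{x^\star}[\delta(\rho_S)]<\alpha=f(x^\star),
$$
contradicting $f(x^\star)\le J^Q(x^\star,S,f)$. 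Hence $x^\star\in S$ for every $S\in\Ec^Q(f)$.

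For (b), I would collapse the two cases into one inequality via translation invariance of Brownian motion: as $\mu^n\equiv0,\ \sigma^n\equiv1$ give $Q^n=Q^\infty=:Q$ and $f^n(\cdot)=f^\infty(\cdot-1/n)$, the assertion ``$\{1/n\}\in\Ec^Q(f^n)$'' is equivalent to ``$\{0\}\in\Ec^Q(f^\infty)$''. Using the formula of Lemma \ref{lm.eg}, the equilibrium condition for $\{0\}$ reads, for $0<|x|<\alpha$,
$$
\alpha-|x|=f^\infty(x)\le J^Q(x,\{0\},f^\infty)=\alpha\int_0^\infty e^{-s}e^{-|x|\sqrt{2\beta s}}\,ds=:\alpha\,g(|x|),
$$
while for $|x|\ge\alpha$ the left side is $0$ and the inequality is trivial. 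Now $g(0)=1$, and by the very definition of $\alpha$, $g'(0)=-\int_0^\infty e^{-s}\sqrt{2\beta s}\,ds=-1/\alpha$; moreover $g$ is convex because each integrand $y\mapsto e^{-y\sqrt{2\beta s}}$ is convex. Convexity yields $g(y)\ge g(0)+g'(0)y=1-y/\alpha$ for $y\ge0$, i.e. $\alpha\,g(|x|)\ge\alpha-|x|$, which is exactly the required inequality, proving (b).

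Combining (a) and (b) with Lemma \ref{lm.optimalmild}, the singleton apex is an equilibrium contained in every equilibrium, so $S^*(f^\infty,Q^\infty)=\{0\}$ and $S^*(f^n,Q^n)=\{1/n\}$ for all $n\in\N$. Finally,
$$
\limsup_{n\to\infty}S^*(f^n,Q^n)=\bigcap_{m\in\N}\ \bigcup_{n\ge m}\{1/n\}=\emptyset,
$$
since each value $1/n$ is attained by a single index, so no real number (in particular not $0$, as $0\neq1/n$) lies in $S^*(f^n,Q^n)$ for infinitely many $n$; this gives \eqref{eq.eg2}. The one genuinely delicate step is (b): the value of $\alpha$ is precisely the smooth-fit/derivative-matching threshold at which the tangent line to the convex function $g$ at the apex coincides with the slope of $f^\infty$, so the tangency-plus-convexity argument together with the exact value of $\alpha$ is where the content lies, while (a) and the computation of the $\limsup$ are routine bookkeeping.
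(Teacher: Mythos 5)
Your proof is correct and follows essentially the same route as the paper: the apex belongs to every mild equilibrium because it carries the strict global maximum of the reward, and the singleton apex is itself an equilibrium because $J^{Q^n}(\cdot,\{1/n\},f^n)$ is convex on each side of $1/n$ with one-sided slopes $\pm 1$ there (the tangency forced by the choice of $\alpha$), hence dominates the tent function, after which $S^*(f^n,Q^n)=\{1/n\}$ and the set-theoretic $\limsup$ is empty. The paper phrases the key inequality by directly comparing the derivatives of $J$ and $f^n$ rather than via your tangent-line inequality for the convex function $g$, but this is the same argument in substance.
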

\begin{proof}
	It is easy to see that $0\leq f^n\leq \alpha=\|f^n\|_\infty$ for any $n\in \overline \N$, which indicates that
	$$
	\frac{1}{n}\in S^*(f^n,Q^n)\quad \forall\, n\in \overline\N.
	$$
	By a calculation similar to \eqref{eq.eg.jblm} and the definition of $\alpha$, for any $n\in \overline\N$, we can get $J^{Q^n}(x, f^n)$ and its derivatives as follows:
	\be\label{eq.eg2.0} 
	\begin{cases}
		J^{Q^n}(x, \{\frac{1}{n}\}, f^n)= \alpha \int_0^\infty e^{-s}e^{-|x-\frac{1}{n}|\sqrt{2\beta s}}ds\quad \forall x\in \R;\\
		(J^{Q^n})'(\frac{1}{n}-, \{\frac{1}{n}\}, f^n)=1, \quad (J^{Q^n})'(\frac{1}{n}+, \{\frac{1}{n}\}, f^n)=-1;\\
		(J^{Q^n})''(x, \{\frac{1}{n}\}, f^n)=\alpha \int_0^\infty e^{-s}\cdot 2\beta s\cdot e^{-|x-\frac{1}{n}|\sqrt{2\beta s}}ds>0\quad \forall x\in (-\infty, \frac{1}{n})\cup (\frac{1}{n}, \infty).
	\end{cases}
	\ee 
	Notice that 
	\be\label{eq.eg2.1} 
	(f^n)'(x)=1, \; \text{for all}\; x\in \left(-\alpha+\frac{1}{n}, +\frac{1}{n}\right), \quad (f^n)'(x)=-1,  \; \text{for all}\;  x\in \left(\frac{1}{n}, \alpha+\frac{1}{n}\right).
	\ee 
	Then \eqref{eq.eg2.0} and \eqref{eq.eg2.1} together imply that
	$$
	J^{Q^n}\left(x, \left\{\frac{1}{n}\right\}, f^\infty\right)\geq f^n(x)\quad \forall n\in \overline\N,
	$$
	as shown in Figure \ref{figure2}.
	Therefore, $S^*(f^n,Q^n)=\{\frac{1}{n}\}$ for any $n\in \overline\N$, and \eqref{eq.eg2} follows.
\end{proof}

\begin{figure}
	\centering
		\includegraphics[width=0.9\linewidth]{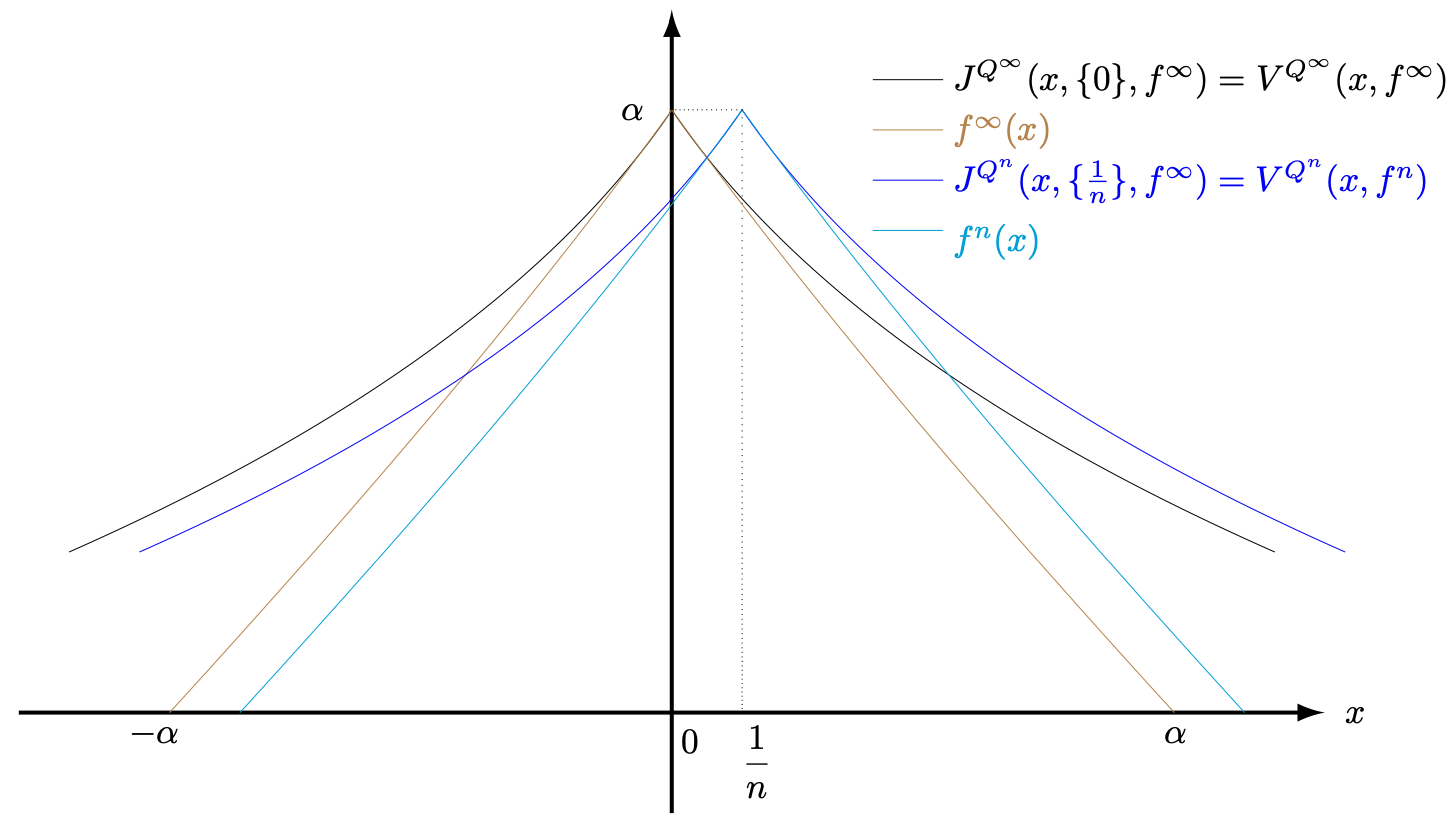}
		\caption{Relations of $V^{Q^n}(\cdot,f^n)$, $f^n(\cdot)$ $\forall n\in \overline\N$.}
		\label{figure2}
\end{figure}

\appendix
\section{A Brief Introduction of Mild, Weak and Strong Equilibria}\label{appendix}
Recall the dynamic of $X$ in \eqref{e1}. When $\delta$ is non-exponential (e.g., $\delta$ is a weighted discount function as in Assumption \ref{assume.delta.weight}), the optimal stopping problem
\be\label{eq.appen.stopping} 
	\sup\limits_{\tau} \E[\delta(\tau)f(X_{\tau})]
\ee 
can be time-inconsistent in the sense that an optimal stopping rule obtained today may no longer be optimal from a future's perspective. One approach to address this time inconsistency is to look for a subgame perfect Nash equilibrium, a strategy such that given future selves following this strategy, the current self has no incentive to deviate from it. For stopping problem \eqref{eq.appen.stopping}, there are mainly three different types of pure equilibria that are investigated in the literature, which we will introduce briefly as follows.

\begin{Definition}\label{def.mild}
	A closed set $S\subset\X$ is said to be  a mild equilibrium (w.r.t. $f$ and $Q$), if
	\begin{numcases}{}
		\label{e3'} f(x)\leq J^{Q}(x,S,f)\quad \forall x\notin S,	\\
		\label{e4}f(x)\geq J^{Q}(x,S,f)\quad \forall x\in S.
	\end{numcases}
\end{Definition}

This kind of equilibrium is first proposed in \cite{huang2018time}, and is called \textit{mild equilibrium} in \cite{MR4205889} to distinguish from other equilibrium concepts. Here $f(x)$ is the value for immediate stopping, while $J^Q(x,S,f)$ represents the value for continuing as $\rho_S$ is the first time to enter $S$ after time $0$. Hence, the economic meaning of condition \eqref{e3'} is clear: when $x\notin S$,  there is no incentive to switch from the action of ``continuing" to ``stopping" since the value $J$ is better than the value $f$. A similar reasoning seems also hold for the other case $x\in S$ in \eqref{e4}. However, {\color{black}given the current one-dimensional setting,} we have $\rho_S=0$ $\P^x$-a.s., and thus $f(x)=J^{Q}(x,S,f)$ for any $x\in S$. Therefore, \eqref{e4} holds trivially and we only require \eqref{e3'} for mild equilibrium as in the Definition \ref{def.mild.optimal}. On the other hand, in multi-dimensional setting, if $X_0$ belongs to the inner part of $S$, then $\rho_S=0$ a.s.; if $X_0$ is at the boundary of $S$, then the identity $\rho_S=0$ requires certain regularity of the boundary, and consequently, the verification of \eqref{e4} on the boundary may not be trivial.

Another type of pure equilibrium concept for time inconsistent stopping, called weak equilibrium, is proposed in \cite{MR3880244}, and is defined as follows. 
\begin{Definition}\label{def.weak}
	A closed set $S\subset\X$ is said to be  a weak equilibrium (w.r.t. $f$ and $Q$), if
	\begin{numcases}{}
		\label{e6} f(x)\leq J^Q(x,S,f)\quad \forall x\notin S,	\\
		\label{e7'}\underset{\varepsilon \searrow 0}{\liminf} \dfrac{f(x)-\E^Q_x[\delta(\rho^\varepsilon_S) f(X_{\rho^\varepsilon_S})]}{\varepsilon} \geq 0\quad \forall x\in S,
	\end{numcases}
	where
	$
	\rho^\varepsilon_S:=\inf\{t\geq \varepsilon: X_t\in S \}.
	$
\end{Definition}
 Compared to Definition \ref{def.mild}, the condition \eqref{e4} (which trivially holds for a one-dimensional recurrent process) is replaced with a first order condition \eqref{e6} for a weak equilibrium.

Recently, \cite{MR4205889} proposed another notion of equilibria as follows. 
\begin{Definition}\label{def.strong}
	A closed set $S\subset\X$ is said to be  a strong equilibrium, if
	\begin{numcases}{}
		\notag f(x)\leq J^Q(x,S,f)\quad \forall x\notin S,	\\
		\label{e8'}\exists\varepsilon(x)>0,\ \text{s.t.}\ \forall \varepsilon'\leq \varepsilon(x), f(x)-\E^Q_x[\delta(\rho^{\varepsilon'}_S) f(X_{\rho^{\varepsilon'}_S})] \geq 0\quad \forall x\in S.
	\end{numcases}
\end{Definition}
Compared to Definition \ref{def.weak}, the first order condition \eqref{e6} is upgraded to a local maximum condition \eqref{e8'}, which better captures the economic meaning of ``Nash equilibrium''. 

By the definitions above, a strong equilibrium must be weak, and a weak equilibrium must be mild. Under continuous time Markov chain setting, \cite{MR4205889} shows that an optimal mild equilibrium is also weak and strong. Recently, such relation has been further extended to the one-dimensional diffusion context in \cite{bayraktar2022equilibria}. The purpose of this paper is to consider the stability of the optimal value induced by all mild equilibria (which is also the value associated with the optimal mild equilibrium as indicated below Lemma \ref{lm.optimalmild}) w.r.t. the dynamic of process $X$ and reward function $f$ in a one-dimensional diffusion setting.

\bibliographystyle{plain}
\bibliography{reference}

\end{document}